\newenvironment{proof}{\paragraph{Proof:}}{\hfill$\square$}
\tikzset{
  shadowed/.style={preaction={
      transform canvas={shift={(2pt,-1pt)}},draw opacity=.2,#1,preaction={
        transform canvas={shift={(3pt,-1.5pt)}},draw
        opacity=.1,#1,preaction={
          transform canvas={shift={(4pt,-2pt)}},draw
          opacity=.05,#1,
  }}}},
}
\newif\if@restonecol
\newtheorem{theorem}{Theorem}	
\newtheorem{problem}{Problem}
\newtheorem{definition}{Definition}
\newtheorem{lemma}{Lemma}
\newtheorem{example}{Example}
\newtheorem{corollary}{Corollary}
\newcommand{\set}[1]{\left\{ #1 \right\}}
\def\R{\mathbb{R}}
\def\Rn{\mathbb{R}^n}
\def\M{\mathcal{M}}
\def\e{\epsilon}
\def\F{\mathcal{F}}
\def\mp{\mu_{+}}
\def\mm{\mu_{-}}
\newcommand{\argmax}{\operatornamewithlimits{argmax}}
\newcommand{\smms}{\textsf{SMMS}}
\newcommand{\Tube}[6][] {
  \colorlet{InColor}{#4}
  \colorlet{OutColor}{#5}
  \foreach \I in {1,...,#3}
           {   \pgfmathsetlengthmacro{\h}{(\I-1)/#3*#2}
             \pgfmathsetlengthmacro{\r}{sqrt(pow(#2,2)-pow(\h,2))}
             \pgfmathsetmacro{\c}{(\I-0.5)/#3*100}
             \draw[InColor!\c!OutColor, line width=\r, #1]
             #6;
           }
}
\tikzset{
  ashadow/.style={opacity=.25, shadow xshift=0.07, shadow yshift=-0.07},
}
\begin{document}

\title{Almost-Sure Reachability in Stochastic Multi-Mode System}

 \author[1]{Fabio Somenzi\thanks{fabio@colorado.edu}}
 \author[1]{Behrouz Touri\thanks{behrouz.touri@colorado.edu}}
 \author[2]{Ashutosh Trivedi\thanks{ashutosh.trivedi@colorado.edu}} 
 \affil[1]{Department of Electrical, Computer, and Energy Engineering, University of Colorado Boulder}
 \affil[2]{Department of Computer Science, University of Colorado Boulder}
\date{}

\maketitle

\begin{abstract}
  A constant-rate multi-mode system is a hybrid system that can switch freely
  among a finite set of modes, and whose dynamics is specified by a finite
  number of real-valued variables with mode-dependent constant rates. 
  We introduce and study a stochastic extension of a constant-rate multi-mode
  system where the dynamics is specified by mode-dependent compactly supported
  probability distributions over a set of constant rate vectors.
  Given a tolerance $\varepsilon > 0$, the \emph{almost-sure reachability}
  problem for stochastic multi-mode systems is to decide the existence of a
  control strategy that steers the system almost-surely from an arbitrary start
  state to an $\varepsilon$-neighborhood of an arbitrary target state while
  staying inside a pre-specified safety set. 
  We prove a necessary and sufficient condition to decide almost-sure
  reachability and, using this condition, we show that almost-sure reachability
  can be decided in polynomial time.
  Our algorithm can be used as a path-following algorithm in combination with
  any off-the-shelf path-planning algorithm to make a robot or an autonomous
  vehicle with noisy low-level controllers follow a given path with arbitrary
  precision. 
\end{abstract}

\section{Introduction}
\label{sec:introduction}
Planning and control of autonomous vehicles (or robots) are increasingly
hierarchical in nature~\cite{Firby89,Gat98} as this provides abstraction to
dissociate the complications involved in lower-level hardware control from
higher level planning decisions.
This naturally gives rise to compositional design frameworks where
the central problem is to design control so as to provide performance
guarantees for planners at higher levels by assuming performance
guarantees from the controllers at lower levels.
Le Ny and Pappas~\cite{LeNP12} recently presented a general notion of robust
motion specification at a lower level and a mechanism to sequentially compose them
to satisfy a higher-level control objective.
In this paper lower-level
controllers are abstracted as modes having constant-rate
dynamics with
stochastic noise; the control objective is to almost surely follow an arbitrary
path with an arbitrary precision.
We prove a necessary and sufficient condition ensuring the existence of such
control.

\begin{figure*}[t]
  \begin{center}
    \scalebox{0.5}{
    \begin{tikzpicture}
    \draw[fill=gray!30, drop shadow={ashadow}] (0,6) -- (0,0) -- (2,4) -- cycle;
    \draw[fill=gray!40, drop shadow={ashadow}] (5,0) -- (10,1) -- (10,3) -- cycle;
    \draw[fill=gray!50, drop shadow={ashadow}] (3,6) -- (3,5) -- (6,1) -- (6,2)
    -- cycle;
    \draw[fill=gray!60, drop shadow={ashadow}] (8,7) -- (13,7) -- (11,4) -- (6,4)
    -- cycle;
    \draw[fill=gray!70, drop shadow={ashadow}] (12,4) -- (11,0) -- (12,0) -- cycle;
    \draw[fill=gray!80, drop shadow={ashadow}] (13,4) -- (15,4) -- (15,8) -- cycle;
    

    \draw[thick, color=black!50] (0, 0) rectangle (16cm, 8cm);
    \Tube{4mm}{25}{white}{black}
         {(2,1)
           to  (3,4)
           to (2,7)
           to (4, 7)
           to (6, 3)
           to (9, 3)
           to (11, 3.5)
           to (11, 2)
         }

   \draw node (r1) at (2, 1) {};
   \draw node (r2) at (11, 2){};
   \draw[fill=black] (r1) circle(0.05);
   \draw[fill=black] (r2) circle(0.05);
   
   \node[inner sep=0pt] (bb8) at (3,1)
       {\includegraphics[width=.08\textwidth]{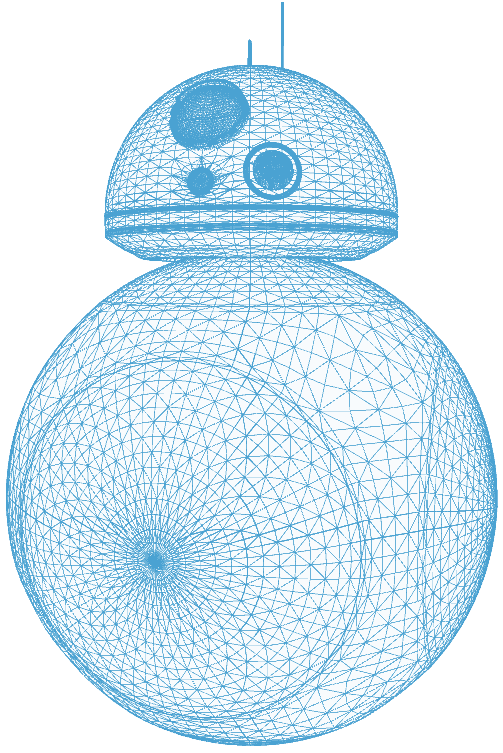}};
    
    \end{tikzpicture}
    }
    \hspace{4em}
  {
    \begin{tikzpicture}[scale=0.45]
    \tikzstyle{lines}=[draw=black!30,rounded corners]
    \tikzstyle{vectors}=[-latex, rounded corners]
    \tikzstyle{rvectors}=[-latex,very thick, rounded corners]

    \draw[lines] (4.2,0)--(10.2,0);
    \draw[lines] (7, 3.2)--(7,-3);
    
    \draw[vectors] (7, 0) --node[right]{$\phantom{x}\bar{\mu}_1$} (8.5, 1.5) node[left]{$$};
    \draw[fill=black,opacity=0.2] (8.5,1.5) circle (3cm);
    
    \draw[vectors] (7, 0) --node[right]{$\phantom{x}\bar{\mu}_2$} (7.5, -2.5) node[right]{$$};
    \draw[fill=orange,opacity=0.2] (7.5,-2.5) circle (3cm);
    
    \draw[vectors] (7, 0) --node[left]{$\bar{\mu}_3\phantom{xxxxx}$} (4.5, 0.5)node[right]{$$};
    \draw[fill=blue,opacity=0.2] (4.5, 0.5) circle (4cm); 

    \node[inner sep=0pt] (bb8) at (7, 0)
         {\includegraphics[width=.03\textwidth]{bb8.png}};

  \end{tikzpicture}
}
  \caption{Path planning for robots with high level of uncertainty.}
\label{fig:intro-example}
\end{center}
\end{figure*}


In order to restrict ourselves to decidable models, we extend the constant-rate
multi-mode system framework of Alur \emph{et al}.~\cite{ATW12} by
allowing bounded stochastic uncertainties with various modes.
These systems, that we call \emph{stochastic multi-mode systems} or \smms{},
consist of a finite set of continuous variables, whose dynamics is 
given by mode-dependent constant-rates that can vary within given bounded sets
according to given probability distributions. 
This dynamics gives rise to a one-and-half player game between a controller and
the environment, where at each step the controller chooses a mode and time
duration and the environment chooses a rate vector for that mode from the given
bounded set following its distribution. 
The system evolves with that rate for the chosen time and the game continues in
this fashion from the resulting state.
A key problem for these systems is \emph{almost-sure
reachability}, which is defined as follows: given a stochastic multi-mode
system, decide whether it is possible to almost surely steer the system from any
starting state to an arbitrary neighborhood of any given target state
without it leaving a safe region.

Almost-sure reachability is a concern when solving path-planning
problem for autonomous
vehicles with finitely many modes associated with noisy dynamics.
For instance, consider the problem of navigating a robot
with a set of three motion directions
along with bounded uncertainty distributions, shown as $\mu_1, \mu_2$
and $\mu_3$ in Figure~\ref{fig:intro-example}.
An important problem for such systems is to decide so-called
\emph{$\varepsilon$-reachability property} that asks whether it is possible for
the given robot to almost surely follow a given trajectory, with arbitrary
precision, as shown by the open tube in Figure~\ref{fig:intro-example} and if
so, to compute the controller strategy. 

Our key result is that given a set of stochastic modes and a path-connected and
bounded safety set,  there is a strategy to reach an arbitrary neighborhood of
an arbitrary target state from any given state, if and only if for every
direction (vector) $\vec{v}$ there is a stochastic mode such that its expected
direction has a positive projection along the direction
$\vec{v}$. (For every mode, the expected rate is depicted as a thick
arrow in Figure~\ref{fig:intro-example}.)  
It is a straightforward consequence of this result that for
probability distributions permitting an
efficient computation of their expected values, this property can be checked in
polynomial time.
Our results can be combined with paths returned by off-the-shelf path-planning
algorithms, such as rapidly exploring random trees~\cite{rrt-plan} or Canny's
algorithm~\cite{Can88}, to accomplish motion planning in the presence of
stochastic uncertainties. 


For a detailed survey of well-known motion planning algorithms we refer the
reader to excellent expositions by Latombe~\cite{latombe2012robot}, LaValle~\cite{Lav06} and by de
Berg \emph{et al}.~\cite{BCKO08}. 
For path-following and trajectory tracking of autonomous robots under
uncertainty we refer the reader to~\cite{AH07}.
Planning using composition of lower-level motion primitives has been studied,
among others, by~\cite{LeNP12,FDF05,FDF00,belta2007symbolic}. 
A general modeling framework for specifying hybrid systems is provided by hybrid
automata~\cite{Alur95thealgorithmic,ACHH92}.  
Given the expressiveness of hybrid automata, it is not surprising that simple
verification questions like reachability are undecidable~\cite{HKPV98} for
the general class of hybrid automata.
Given this result, there has been a growing body of work on decidable subclasses
of hybrid automata~\cite{ACHH92,BBM98}.  
Most notable among these classes are initialized rectangular hybrid
automata~\cite{HKPV98}, piecewise-constant derivative
systems~\cite{AMP95}, timed automata~\cite{alurDill94}, and multi-mode systems.

As mentioned earlier, stochastic multi-mode systems are a
generalization of constant-rate multi-mode systems. 
Alur, Trivedi, and Wojtczak~\cite{ATW12} considered constant-rate multi-mode
systems and showed that the reachability problem---deciding the reachability of
a specified state while staying in a given safety set---and the schedulability
problem---deciding the existence of a non-Zeno control so that the system
always stays in a given bounded and convex safety set---for this class of systems
can be solved in polynomial time.
  
Alur \emph{et al}.~\cite{AFMT13} introduced bounded-rate multi-mode
systems where the
rate in each mode is a constant that is picked from a given bounded
set.
These systems can be considered as constant-rate
multi-mode systems with uncertainties.
It is known~\cite{AFMT13,BJKST15} that the schedulability and reachability
problems for bounded-rate multi-mode systems are, although intractable
(co-NP-complete), decidable.
To the best of our knowledge, there is no known result on stochastic extensions
of multi-mode systems.

The paper is organized as follows. 
We begin by reviewing necessary  background on probability theory in the next
section, followed by the problem formulation in Section~\ref{sec:problem}.
In Section~\ref{sec:one-dim} we prove our key theorem for a simpler setting of
one-dimensional stochastic multi-mode systems.
We treat the case of general multi-dimensional systems in Section~\ref{sec:general}.
Although, the proof for one-dimensional system follows from the proof for the
general case, the proof for one-dimensional case is different and much simpler. 
We provide the algorithms based on our theorem to solve motion planning problem
in Section~\ref{sec:complexity},
before concluding in Section~\ref{sec:conclusion} by discussing
potential future directions.


\section{Preliminaries}
\label{sec:prelims}
Let $\R$ be the set of real numbers.
We write $[m]$ for the set $\{1,\ldots,m\}$.
For vectors $u,v\in \R^n$, we write $u\cdot v$ for the inner product of $u$ and
$v$, i.e., $u\cdot v:=\sum_{i=1}^nu_iv_i$.
We use $\|\cdot\|$ to denote the standard Euclidean-norm in $\Rn$,
i.e., $\|u\|:=(\sum_{i=1}^nu_i^2)^{1/2}$.
We say that a set $S\subseteq\Rn$ is bounded, if there exists a $\rho\geq 0$
such that $\|x\|\leq \rho$ for all $x\in S$.
For a vector $x\in \Rn$ and $d>0$, we define the ball $B(x, d)$ of radius $d$
around $x$ as
\[
B(x,d):=\{y\in\Rn\mid \|x-y\|<d\}.
\]
We also let the $\ell_1$ norm of $u$ be $\|u\|_1:=\sum_{i=1}^n|u_i|$. 
 
\subsection{Probability Space}
Here, we present background material and the main results in probability theory that will be needed later. Readers are referred to standard references such as \cite{durrett2010probability} for more details. Let $(\Omega,\F,P(\cdot))$ be a \emph{probability space} where $\Omega$ is a sample
space, $\F$ is a $\sigma$-algebra containing all the events of interest in the probability
space, and $P:\F\to[0,1]$ is a probability measure on $(\Omega,\F)$. For a given
probability space, we say that a property $p$ holds with probability 1 or almost
surely (a.s.) if  
\[
P(\{\omega\in \Omega\mid \omega\text{ satisfies }p\})=1.
\]
We assume all the probability measures in $\Rn$ discussed in this paper to be
Borel measures.

We say that a distribution (probability measure)
$\mu$ over $\R^n$ is \emph{compactly supported} if $\mu(\{x\in\Rn\mid \|x\|\geq
\rho\})=0$ for some $\rho>0$.
For a distribution $\mu$ over $\R^n$,  we write $\overline{\mu}$ to denote its
expected vector $E(\mu)$, which is the expected vector of a
random vector $Z$ whose distribution is $\mu$.
Moreover, for an event $A\in \F$, we write $1_A(\omega)$ for its characteristic
function, defined as 
\[1_A(\omega):=\left\{\begin{array}{ll}
1&\mbox{if $\omega\in A$}\cr 
0&\mbox{otherwise}
\end{array}\right..\]
For two random variables $X$ and $Y$, we use the \emph{wedge}-notation to denote
their minimum, i.e., $X\wedge Y:=\min(X,Y)$.

For an ensemble of random variables $\{\xi_k\}_{k\in I}$ over $(\Omega,\F)$, we
define $\sigma(\{\xi_k\}_{k\in I})$ to be the smallest $\sigma$-algebra such
that $\xi_k$s are measurable with respect to it.  
We also use the notation $\{y(k,\omega)\}_{k\geq 0}$ to denote a fixed sample
path $\omega\in \Omega$ of  a (discrete-time) random processes $\{y(k)\}$.  

\subsection{Martingales}
A \emph{filtration} $\{\F_k\}$ in a probability space $(\Omega,\F,P(\cdot))$ is a
sequence of sub-$\sigma$-algebras (of $\F$) such that
$\F_1\subseteq\F_2\subseteq \cdots$.
Let $\{\alpha(k)\}$ be a random process
with $E(|\alpha(k)|)<\infty$. We say that $\{\alpha(k)\}$ is adapted to
$\{\F(k)\}$, if $\alpha(k)$ is measurable with respect to $\F_k$ for all $k\geq
0$.

\begin{definition}[Martingales]
  We say that a random process $\{\alpha(k)\}$ adapted to a filtration
  $\{\F_k\}$ is a \textbf{martingale} with respect to a filtration $\{\F(k)\}$ if 
   \[E(\alpha(k+1)\mid \F_k)=\alpha(k),\]
 It is a \textbf{submartingale} if
   \[E(\alpha(k+1)\mid \F_k)\geq \alpha(k),\]
 and a \textbf{supermartingale} if 
   \[E(\alpha(k+1)\mid \F_k)\leq \alpha(k).\]
\end{definition}
We will make use of the following important result in martingale theory
(cf.\ Theorem~5.2.8 in \cite{durrett2010probability}).  

\begin{theorem}[Martingale convergence theorem]
  \label{thrm:MCT}
  ~Let\linebreak $\{\alpha(k)\}$ be a martingale such that $E(|\alpha(k))|)<B$
  for all $k\geq 0$ and some bound $B\in \R$. Then, 
  \[
  \alpha=\lim_{k\to\infty}\alpha(k)
  \]
  exists almost surely and $E(|\alpha|)<\infty$.   
\end{theorem}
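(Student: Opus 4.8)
The plan is to follow the classical route through Doob's upcrossing inequality, which is the standard device for turning a uniform $L^1$ bound into almost-sure control of the oscillations of the sample paths. First I would fix two rationals $a<b$ and, for each $n$, let $U_n([a,b])$ denote the number of completed upcrossings of the interval $[a,b]$ by the process up to time $n$, where an upcrossing is an excursion during which the path rises from at or below $a$ to at or above $b$. A path fails to converge in $[-\infty,\infty]$ precisely when $\liminf_k \alpha(k) < \limsup_k \alpha(k)$, and this forces $U_\infty([a,b])=\infty$ for some rational pair $a<b$; so it suffices to show each such event is null.

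The core estimate is the upcrossing inequality $(b-a)\,E(U_n([a,b])) \leq E((\alpha(n)-a)^+)$. To prove it I would pass to the nonnegative submartingale $(\alpha(k)-a)^+$ (convexity of $x\mapsto(x-a)^+$ together with conditional Jensen makes it a submartingale) and count its upcrossings of $[0,b-a]$. Introducing a predictable $\{0,1\}$-valued betting process $H$ --- switched on the first time the path drops to $a$ and off the first subsequent time it reaches $b$, repeating thereafter --- I would form the transform $Y_n=\sum_{m=1}^{n}H_m\bigl((\alpha(m)-a)^+-(\alpha(m-1)-a)^+\bigr)$. Since $H$ is predictable, nonnegative, and bounded, $Y$ is again a submartingale with $Y_0=0$; by construction each completed upcrossing contributes at least $b-a$, so $Y_n\geq (b-a)U_n([a,b])$, while comparing with the complementary bet $1-H$ gives $E(Y_n)\leq E((\alpha(n)-a)^+)$. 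Combining the two yields the inequality.

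With the upcrossing inequality in hand, the remainder is routine. From $(\alpha(n)-a)^+\leq|\alpha(n)|+|a|$ and the hypothesis $E(|\alpha(n)|)<B$, the right-hand side is bounded uniformly in $n$ by $B+|a|$, so $E(U_n([a,b]))\leq (B+|a|)/(b-a)$ for all $n$. Letting $n\to\infty$ and invoking monotone convergence gives $E(U_\infty([a,b]))<\infty$, hence $U_\infty([a,b])<\infty$ almost surely. Taking the countable union over rational pairs $a<b$ shows the non-convergence event is null, so $\alpha=\lim_k\alpha(k)$ exists almost surely in $[-\infty,\infty]$. Finally, Fatou's lemma gives $E(|\alpha|)=E(\liminf_k|\alpha(k)|)\leq\liminf_k E(|\alpha(k)|)\leq B<\infty$, which in particular forces $|\alpha|<\infty$ almost surely and completes the proof.

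I expect the upcrossing inequality to be the only genuinely delicate step: correctly defining the predictable betting process and verifying that the transform's expectation controls the number of completed upcrossings from below requires care, whereas everything downstream --- the uniform bound, the countable union, and the Fatou argument --- is a soft limiting argument.
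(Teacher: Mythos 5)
Your proof is correct, but note that the paper does not prove this theorem at all: it is quoted as a standard result with a citation to Theorem~5.2.8 of Durrett's \emph{Probability: Theory and Examples}. The argument you give --- reduction to the nonnegative submartingale $(\alpha(k)-a)^+$ via conditional Jensen, the predictable betting transform yielding Doob's upcrossing inequality $(b-a)E(U_n([a,b]))\leq E((\alpha(n)-a)^+)$, the countable union over rational pairs, and the final Fatou step to get $E(|\alpha|)\leq B<\infty$ --- is precisely the canonical proof found in that reference, and all the delicate points (the predictable switching times, the fact that an incomplete final upcrossing contributes nonnegatively because the transformed process is nonnegative, and the comparison with the complementary bet $1-H$) are handled correctly. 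So your proposal is a faithful and complete reconstruction of the standard proof the paper implicitly relies on; there is nothing to add or correct.
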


Throughout this work, we assume that $\{\F_k\}$ is the natural filtration
for the underlying process $\{\alpha(k)\}$ and hence,  
\[
E(\alpha(k+1)\mid \F_k)=E(\alpha(k+1)\mid y(0),y(1),\ldots, y(k)).
\]

One of the immediate consequences of the martingale convergence theorem is the
following result that follows from the Robbins-Siegmund Theorem (cf.\ Theorem 7.11 in
\cite{Poznyak2009}).  
\begin{corollary}\label{cor:finitesum}
  If $\{\alpha(k)\}$ is a submartingale such that
  for all $k\geq 0$ we have $E(|\alpha(k)|)<B$ and  
  \begin{align}
    E(\alpha(k+1)\mid \F_k)\geq \alpha(k)+\xi(k),
  \end{align}
  where $\xi(k)\geq 0$ almost surely, then
  \[
  \sum_{k=0}^\infty \xi(k)<\infty \text{ ~almost surely.}
  \]
\end{corollary}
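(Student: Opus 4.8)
The plan is to bound the \emph{expected} total increment and then exploit nonnegativity of $\xi$ to pass from an integrability statement to an almost-sure one. The only structural facts I need are the one-step inequality in the hypothesis, the tower property of conditional expectation, and the uniform $L^1$ bound $E(|\alpha(k)|) < B$; somewhat pleasantly, the martingale convergence theorem is not actually needed for this simplified form of the Robbins--Siegmund estimate.

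First I would rewrite the hypothesis as the pathwise bound $\xi(k) \le E(\alpha(k+1)\mid \F_k) - \alpha(k)$, valid almost surely. Taking expectations of both sides and applying the tower property $E\big(E(\alpha(k+1)\mid \F_k)\big) = E(\alpha(k+1))$ gives $E(\xi(k)) \le E(\alpha(k+1)) - E(\alpha(k))$, where the right-hand side is finite since each $\alpha(k)$ is integrable. Summing over $k = 0, \ldots, N-1$ the right-hand side telescopes, so
\[
\sum_{k=0}^{N-1} E(\xi(k)) \;\le\; E(\alpha(N)) - E(\alpha(0)) \;\le\; E(|\alpha(N)|) + E(|\alpha(0)|) \;<\; 2B,
\]
uniformly in $N$. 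Letting $N\to\infty$ then yields $\sum_{k=0}^\infty E(\xi(k)) \le 2B < \infty$.

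The final step is to interchange the infinite sum with the expectation. Because every $\xi(k)$ is nonnegative, the monotone convergence theorem (Tonelli) applies and gives $E\big(\sum_{k=0}^\infty \xi(k)\big) = \sum_{k=0}^\infty E(\xi(k)) < \infty$. A nonnegative random variable with finite expectation is finite almost surely, so $\sum_{k=0}^\infty \xi(k) < \infty$ almost surely, as claimed.

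The step I expect to require the most care is the bookkeeping that makes each inequality and interchange legitimate, rather than any deep idea: one must check that $\xi(k)$ is measurable and that the dominating term $E(\alpha(k+1)\mid\F_k) - \alpha(k)$ is integrable (both follow from $\{\alpha(k)\}$ being an $L^1$ submartingale), and that the telescoping and the monotone-convergence interchange are valid in $[0,\infty]$. Conceptually, the same conclusion can be read off the Doob decomposition $\alpha(k) = M(k) + A(k)$ of the submartingale, whose predictable nondecreasing compensator satisfies $A(k) \ge \sum_{j<k}\xi(j)$ and $E(A(k)) = E(\alpha(k)) - E(\alpha(0)) < 2B$; this is the viewpoint that ties the estimate to the martingale machinery recalled above, but for the present statement the elementary expectation bound already suffices.
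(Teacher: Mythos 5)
Your proof is correct. The paper itself does not give a self-contained argument for this corollary: it simply asserts that the statement follows from the Robbins--Siegmund theorem (citing Poznyak), which is in turn presented as a consequence of the martingale convergence theorem. You instead prove it directly: take expectations of the one-step inequality, use the tower property to get $E(\xi(k)) \le E(\alpha(k+1)) - E(\alpha(k))$ (noting that $0 \le \xi(k) \le E(\alpha(k+1)\mid\F_k) - \alpha(k)$, so $\xi(k)$ is dominated by an integrable nonnegative variable and its expectation is finite), telescope to get the uniform bound $\sum_{k=0}^{N-1} E(\xi(k)) \le E(|\alpha(N)|) + E(|\alpha(0)|) < 2B$, and conclude via monotone convergence that $E\bigl(\sum_{k=0}^\infty \xi(k)\bigr) < \infty$, hence the sum is finite almost surely. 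This is more elementary and more transparent than the route through Robbins--Siegmund: it needs only the $L^1$ bound and nonnegativity of $\xi$, and, as you observe, it does not use martingale convergence at all. What the general Robbins--Siegmund machinery buys in exchange is strictly more than is claimed here (almost-sure convergence of $\alpha(k)$ itself and tolerance of multiplicative perturbations), none of which is needed for the stated conclusion. Your closing remark about the Doob decomposition is also accurate and correctly ties your estimate back to the compensator. One very minor bookkeeping point: you should state (as you implicitly do) that each $E(\xi(k))$ is well defined and finite before telescoping; your domination argument already supplies this.
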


In addition to the above theorem, we make use of the following
result, which follows immediately from the definition of martingale
and the dominated convergence theorem (cf.\ Theorem~2.24 in
\cite{folland2013real}).
\begin{lemma}\label{lemma:stoppingtime}
  Let $\{\alpha(k)\}$ be a uniformly bounded supermartingale, i.e.,
  $|\alpha(k)|\leq B$ almost surely for some real number $B>0$ and all $k\geq
  0$. Then, if
  \[
  \alpha=\lim_{k\to\infty}\alpha(k),
  \]
  then we have 
  \[E(\alpha)\leq \lim_{k\to\infty}E(\alpha(k)).\]
  The same result holds for a uniformly bounded submartingale with the
  direction of the inequality reversed, i.e.,
  \[E(\alpha)\geq \lim_{k\to\infty}E(\alpha(k)).\]
  In particular, if $\{\alpha(k)\}$ is a uniformly bounded martingale,
  then $\lim_{k\to\infty}E(\alpha(k))=E(\alpha)$.  
\end{lemma}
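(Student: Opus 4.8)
The plan is to reduce everything to a single application of the dominated convergence theorem, using the uniform bound to supply the dominating function. Before invoking it, I would first check that the right-hand side $\lim_{k\to\infty}E(\alpha(k))$ is well defined. For a supermartingale, the tower property gives $E(\alpha(k+1))=E\!\left(E(\alpha(k+1)\mid\F_k)\right)\le E(\alpha(k))$, so the real sequence $\{E(\alpha(k))\}$ is non-increasing; being bounded below by $-B$, it converges to a finite limit. The submartingale case is symmetric: $\{E(\alpha(k))\}$ is non-decreasing and bounded above by $B$, hence convergent.

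Next I would handle the almost-sure limit $\alpha$. The hypothesis of the lemma records $\alpha=\lim_{k\to\infty}\alpha(k)$ (in the martingale case this existence is in any event guaranteed by Theorem~\ref{thrm:MCT}, since $E(|\alpha(k)|)\le B<\infty$ uniformly). Now comes the main step: because $|\alpha(k)|\le B$ almost surely for every $k$, the constant $B$ is an integrable dominating function on the probability space ($\int_\Omega B\,dP=B<\infty$), and $\alpha(k)\to\alpha$ almost surely. The dominated convergence theorem (DCT) then yields that $\alpha$ is integrable and
\[
E(\alpha)=\lim_{k\to\infty}E(\alpha(k)).
\]
Both asserted inequalities are immediate weakenings of this equality, and in the martingale case --- where $\{\alpha(k)\}$ is simultaneously a supermartingale and a submartingale --- the two inequalities combine to the stated equality.

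There is no serious obstacle here; the only points requiring care are structural rather than computational. The first is to notice that on a probability space a uniform almost-sure bound automatically provides the integrable dominating function that DCT requires, so the convergence of expectations comes for free. The second, slightly subtle, point is that DCT in fact delivers equality in all three cases, so the inequality statements for the super- and submartingale are strictly weaker than what the argument proves; this is not a contradiction, merely the form in which the conclusion is most convenient to apply later. If one preferred to prove only the inequalities directly, while relaxing the two-sided bound to a one-sided one, one could instead apply Fatou's lemma to the non-negative sequences $\alpha(k)+B$ and $B-\alpha(k)$ respectively, combined with the monotonicity of $\{E(\alpha(k))\}$ established above.
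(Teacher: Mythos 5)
Your proof is correct and follows essentially the same route as the paper, which simply invokes the dominated convergence theorem with the uniform bound $B$ serving as the dominating function; you have merely written out the details (monotonicity of $\{E(\alpha(k))\}$ via the tower property, a.s.\ convergence, then DCT). Your observation that DCT actually yields equality in all three cases, so the stated inequalities are weaker than what the hypothesis supports, is accurate and consistent with the paper's phrasing.
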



\section{Problem Formulation}
\label{sec:problem}
An $n$-dimensional \emph{stochastic multi-mode system} (\smms{}) $\M$ is a
plant that is governed by a set of \emph{stochastic modes} in $\Rn$, i.e., a
set of distributions on $\Rn$, $\{\mu_1,\ldots,\mu_{\gamma}\}$ for
$\gamma \geq  1$.
The plant dynamics starts at a point $x(0)\in\Rn$ at the time
$t_0=0$.  At each discrete iteration  $k=0,1,\ldots$, the controller
chooses a mode $m(k)\in [\gamma]$ and dwelling time $d(k)>0$ for that
mode and then, the plant's dynamics follows 
\begin{align}
  \label{eqn:mmsdyn}
  \dot{x}(t)=\eta_{k+1}, \mbox{for $t\in [t_k,t_k+d(k))$} ,
\end{align}
where $\eta_{k+1}\in\Rn$ is a sample point from the chosen distribution
$\mu_{m(k)}$ and $t_k$ is recursively defined by
\begin{eqnarray*}
  t_k=
  \begin{cases}
    0 & \text{ if $k = 0$} \\ 
    t_{k-1} + d(k-1) & \text{ for $k>0$}
  \end{cases}.
\end{eqnarray*}
In other words, $x(t)=x(t_k)+(t-t_k)\eta_{k+1}$ for
$t\in [t_k,t_{k+1})$.  When the dimension $n$ is clear form the
context, we simply refer to a stochastic multi-mode system as a set
$\M = \{\mu_1,\ldots,\mu_{\gamma}\}$ of distributions.  An \smms{} is
\emph{deterministic} if for each mode $\mu\in \M$, we have
$P(\mu=v)=1$ for some (deterministic) vector $v\in \mathbb{R}^n$.
We denote the state of the plant at the decision times $t_0,t_1,\ldots$ by 
\begin{align}\label{eqn:location}
  y(k):=x(t_k), \mbox{ for }k=0,1,\ldots.
\end{align}
For the control of an \smms{}, our focus is to determine the mode $m(k)\in [\gamma]$ and the time $d(k)$ based
on the past observation of the system's behavior.
\begin{definition}[Control policy]
  \label{def:causal}
  A \emph{control policy} (or \emph{control strategy}) is
  a sequence $\{m(k),d(k)\}_{k\geq 0}$, where $m(k)$ is a random
  variable supported in $[\gamma]$ and $d(k)$ is a non-negative random
  variable for all $k\in \mathbb{Z}^+$. A control policy is \emph{causal} if
  $m(k)$ and $d(k)$ are measurable with respect to
  $\F_k=\sigma(\eta_1,\ldots,\eta_{k})$ ($\F_0=\{\emptyset,\Omega\}$)
  for all $k\geq 0$.
\end{definition}

Throughout this work, all control policies of interest are assumed
to be causal. The main property that we investigate is the
following reachability property.  
\begin{definition}[$\epsilon$-Reachability with safety set $S$]
  For a \smms{} $\M$ and a safety set $S \subseteq \Rn$, we say that
  $\M$ satisfies the $\epsilon$-reachability property with safety set
  $S$, or simply almost-sure reachability, if for any starting point $x_s \in S$, any terminal point
  $x_t \in S$, and any $\epsilon$-neighborhood of $x_t$, there exists a
  causal policy (controller) that steers $\M$ from the initial
  point $x(0)=x_s$ to a target point $x(T)=y(k)\in B(x_t,\e)$ in a
  finite time $T=t_k<\infty$ such that $x(t)\in S$ for all $t\in[0,T]$
  almost surely.
\end{definition}
Our approach to solve the $\varepsilon$-reachability problem is to characterize
necessary and sufficient conditions that guarantee
$\varepsilon$-reachability as described below.
\begin{problem}
  Given an open set $S\in \Rn$, under what conditions on $S$ and the stochastic
  modes can one guarantee $\epsilon$-reachability with safety set $S$?
\end{problem}
For a bounded safety set $S$, one can only use compactly supported
measures $\mu$ to maintain safety, as otherwise there is a non-zero probability
that the dynamics~\eqref{eqn:mmsdyn} does not satisfy safety. Therefore we
henceforth assume that all the distributions in $\M$ are compactly
supported and we let
\begin{equation*}
  L_{\M} = \max_i \inf \{ \rho \mid \mu_i(\{ x \in \Rn \mid \| x \|
  \geq \rho \}) = 0 \} .
\end{equation*}

Our key observation is that the $\epsilon$-reachability property of a stochastic
multi-mode system is closely related to the $\epsilon$-reachability
property of the associated (deterministic) expected multi-mode system defined
as follows. 
\begin{definition}[Expected Multi-mode System]
  The expected multi-mode system $\overline{\M}$ of 
  a stochastic multi-mode system $\M=\{\mu_1,\ldots,\mu_{\gamma}\}$
  is the deterministic multi-mode system whose
  dynamics in each mode is given by the expected direction of the corresponding mode in $\M$,
  i.e., $\overline{\M}$ is the deterministic multi-mode system with 
  modes $\overline{\M}=\{\overline{\mu}_1,\ldots,\overline{\mu}_{\gamma}\}$. 
\end{definition}
The main contribution of this work is the following result.
\begin{theorem*}
  Let $\M=\{\mu_1,\ldots,\mu_{\gamma}\}$ be a stochastic multi-mode system with
  a finite set of compactly supported distributions and let $S\subset \R^n$ be a
  path-connected and bounded open safety set.
  The following statements are equivalent:  
  \begin{enumerate}[a.]
  \item \label{item:eas}
    $\M$ satisfies the $\epsilon$-reachability property
    with safety set $S$.
  \item \label{item:ee}
    The expected multi-mode system $\overline{\M}$ of $\M$ satisfies
    the $\e$-reachability property with safety set $S$.
  \item \label{item:eav}
    For every non-zero vector $v \in \R^n$, there exists
    a mode $\mu \in \M$ such that $\overline{\mu} \cdot v>0$.
  \end{enumerate}
\end{theorem*}
In the next section, we visit this theorem in the context of $1$-dimensional
stochastic multi-mode systems and give a simpler proof for this result
than that of an $n$-dimensional \smms{}. Also, the statement and the
proof of the main result for the $1$-dimensional dynamics sheds light
on the statement and the proof of the main theorem in its complete
generality.


\section{One-Dimensional Dynamics}
\label{sec:one-dim}
For the rest of this section we assume that the given stochastic multi-mode
system $\M = \set{\mu_1, \mu_2, \ldots, \mu_\gamma}$ is a $1$-dimensional
system. 
Before we characterize the necessary and sufficient condition for the
$\e$-reachability, we establish the following result.   
\begin{lemma}\label{lemma:Elog}
  Let $X$ be a random variable with $E(X)>0$ and $|X|\leq 1$ almost surely.
  Then we have that
  \[
  E(\log(1+\delta X))>0,
  \]
  for every $\delta<E(X)/2$.
\end{lemma}
\begin{proof}
  Let $\delta< E(X)/2\leq 1/2$. For $z\in [-\delta,\delta]$, the
  Taylor expansion of $f(z)=\log(1+z)$ around $z=0$ implies: 
  \begin{align*}
    \log(1+z)&=z-\frac{1}{(1+\tilde{z})^2}\frac{z^2}{2}\geq
    z-\frac{1}{(1-\delta)^2}\frac{z^2}{2}\geq z-2z^2, 
  \end{align*}
  for some $\tilde{z}\in[-\delta,\delta]$, where the first inequality follows
  from the fact that $1/(1+\tilde{z})^2$ is a decreasing function of
  $\tilde{z}$ and the second inequality follows from the fact that  
  \[\delta\leq E(X)/2\leq \frac{1}{2}.\] 
  Therefore, for a random variable $X$ whose support is in $[-1,1]$, we have:
  \begin{align*}
    E(\log(1+\delta X)) &\geq \delta E(X)-2\delta^2E(X^2) \\
    &\geq 2\delta(E(X)/2-\delta)> 0,
  \end{align*}
  where the last inequality follows from $\delta\in (0,E(X)/2)$.
\end{proof}

Next, we show that almost sure $\e$-reachability with safety set
$S=(a,b)$ is achievable if and only if there exist two modes $\mu_{+}$
and $\mu_{-}$ such that $\bar{\mu}_{+}>0$ and $\bar{\mu}_{-}<0$. Note
that for a deterministic system this is indeed necessary and
sufficient: it is necessary because otherwise one cannot move the
deterministic system from $x_s=\frac{3a+b}{4}$ to $x_t=\frac{a+3b}{4}$
or vice versa. It is sufficient because, once we have a positive and a
negative control direction, then one can steer the system
towards left and right to the desired position without violating
safety. However, to show such a result for stochastic systems one must
account for the possibly adversarial effects of the noise in the
control vectors. Through a proper choice of control policy we want
to make sure that the system will reach the target while ensuring
safety.

\begin{theorem}\label{thrm:1dcase}
  Let $\M=\{\mu_1,\ldots,\mu_{\gamma}\}$ be a \smms{} with 
  a finite set of compactly supported distributions and, without loss of
  generality, assume that each $\mu_i \in \M$ is distributed over the
  unit interval. 
  Further assume that the safety set is $S=(a,b)$ for some $a<b$.

  Then, $\M$ satisfies the $\epsilon$-reachability property with safety set $S$
  if and only if there exist modes $\mp\in \M$ and $\mm\in \M$ such that
  $E(\mp)>0$ and $E(\mu-)<0$.
  
  In other words, $\M$ satisfies the $\epsilon$-reachability property with
  safety set $S$ if and only if the expected deterministic 
  system $\overline{\M}$ satisfies the same property.
\end{theorem}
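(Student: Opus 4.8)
The plan is to prove the two implications separately, and in both directions I would exploit the one-step identity
$E(y(k+1)\mid\F_k)=y(k)+d(k)\,\overline{\mu}_{m(k)}$,
which holds for every causal policy because $d(k)$ and $m(k)$ are $\F_k$-measurable while, conditioned on $\F_k$, the sample $\eta_{k+1}$ is drawn from $\mu_{m(k)}$ with mean $\overline{\mu}_{m(k)}$. The equivalence with $\overline{\M}$ is then immediate once the characterization in terms of signs of $\overline{\mu}_{\pm}$ is established, since for the deterministic system the two-sign condition is exactly what lets it move left and right.

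For \emph{necessity}, suppose $\e$-reachability holds but, toward a contradiction, that $\overline{\mu}_i\le 0$ for every mode. Then the identity gives $E(y(k+1)\mid\F_k)\le y(k)$, so $\{y(k)\}$ is a supermartingale taking values in the bounded interval $(a,b)$. Choose $x_s=\tfrac{3a+b}{4}$, $x_t=\tfrac{a+3b}{4}$ and any $\e<\tfrac{b-a}{2}$, so that $x_t-\e>x_s$. If some policy reaches $B(x_t,\e)$ at the a.s.-finite stopping time $\tau$, the stopped process $y(k\wedge\tau)$ is a uniformly bounded supermartingale converging a.s.\ to $y(\tau)$, and Lemma~\ref{lemma:stoppingtime} yields $E(y(\tau))\le\lim_k E(y(k\wedge\tau))\le y(0)=x_s$, contradicting $y(\tau)>x_t-\e>x_s$. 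Hence some mode has $\overline{\mu}>0$; the mirror argument (a target near $a$) forces some mode with $\overline{\mu}<0$.

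For \emph{sufficiency}, assume modes $\mp,\mm$ with $E(\mp)>0$ and $E(\mm)<0$, fix $x_s,x_t\in(a,b)$, and shrink $\e$ so that $(x_t-\e,x_t+\e)\subset(a,b)$ (reaching a smaller neighborhood implies reaching $B(x_t,\e)$). If $x_s$ is already in the band we are done; otherwise take $x_s\le x_t-\e$, the case $x_s\ge x_t+\e$ being symmetric with $\mm$ and the distance $b-y$. I would run the policy: while $y(k)\le x_t-\e$ play $\mp$ with dwelling time $d(k)=\delta\,(y(k)-a)$, stopping as soon as $y(k)$ enters the band. Setting $w(k)=y(k)-a$ gives the multiplicative recursion $w(k+1)=w(k)(1+\delta\eta_{k+1})$ with $|\eta_{k+1}|\le1$. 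Choosing $\delta$ small enough to satisfy simultaneously $\delta<E(\mp)/2$, $\delta\,(x_t-\e-a)<b-(x_t-\e)$, and $\delta\,(x_t-\e-a)<2\e$ makes the policy causal and safe: each displacement has magnitude at most $\delta w(k)<\min\{y(k)-a,\,b-y(k)\}$, so $y(k+1)\in(a,b)$, and since the motion is linear between decision times, convexity of the interval keeps $x(t)\in(a,b)$ throughout; moreover every step has length $<2\e$, so the step that crosses $x_t-\e$ must land inside the band rather than overshoot it.

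Progress then follows from a logarithmic potential. By Lemma~\ref{lemma:Elog}, applied with $X=\eta_{k+1}$ and $\delta<E(\mp)/2$, the drift $c:=E(\log(1+\delta\eta_{k+1}))$ is strictly positive, so $\alpha(k):=\log w(k)$ satisfies $E(\alpha(k+1)\mid\F_k)=\alpha(k)+c$ below the band and is bounded above by $\log(b-a)$. Writing $\tau=\inf\{k:y(k)>x_t-\e\}$, the stopped process obeys $\log w(0)+c\,E(\tau\wedge k)=E(\alpha(k\wedge\tau))\le\log(b-a)$, hence $E(\tau)\le(\log(b-a)-\log(x_s-a))/c<\infty$ and $\tau<\infty$ a.s.; the hitting time $T=t_\tau=\sum_{j<\tau}d(j)$ is a finite sum of finite positive dwelling times, so $T<\infty$ a.s. I expect the main obstacle to be exactly this step: the drift must stay uniformly positive all the way up to the boundary being left behind. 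A constant dwelling time keeps the additive drift $\delta E(\mp)$ from vanishing but breaks safety near $a$; a dwelling time proportional to $w(k)$ restores safety but makes the additive drift $\delta E(\mp)\,w(k)\to0$ as $w(k)\to0$, which by itself does not preclude lingering near $a$. It is precisely the logarithmic coordinate together with the quantitative threshold $\delta<E(X)/2$ of Lemma~\ref{lemma:Elog} that converts a shrinking additive drift into a constant multiplicative one and yields the finite bound on $E(\tau)$; the rest—selecting $\delta$ to meet the finitely many inequalities and checking measurability—is routine.
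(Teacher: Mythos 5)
Your proof is correct and follows essentially the same route as the paper: a bounded sub/supermartingale argument (via Lemma~\ref{lemma:stoppingtime}) for necessity, and for sufficiency the constant-mode policy with dwelling time proportional to the distance from the boundary, which yields the multiplicative recursion controlled by Lemma~\ref{lemma:Elog} and a step size small enough to prevent overshooting $B(x_t,\e)$. The only cosmetic difference is the final step of sufficiency, where you bound $E(\tau)$ by optional stopping of the logarithmic potential whereas the paper invokes the strong law of large numbers on the i.i.d.\ increments of $\log y(k)$; both are valid.
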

\begin{proof}
  Without loss of generality, we assume that $a=0$.

  (Necessity.)  Suppose that $\M$ does not have a mode $\mm$ with
  $\overline{\mu}_{-}<0$. We prove a stronger statement: we show that there does not
  exists any causal policy that can almost surely reach the
  $\epsilon$-neighborhood of $x_t$ for any starting point $x_s\in (a,b)$, any
  target point $x_t\in (a,b)$ with $x_t<x_s$ and any $\epsilon<(x_s-x_t)$.

  To show  this consider any causal policy $\{m(k),d(k)\}_{k\geq 0}$ that
  guarantees safety almost surely. Consider the random process $\{y(k)\}$
  defined by \eqref{eqn:location}. Note that $y(k)$ is adapted to $\F_k$ (the
  natural filtration for the $\eta_k$). It follows that:  
  \begin{align*}
    E(y(k+1)\mid \F_k)=y(k)+d(k+1)E(\mu_{m(k)})\geq y(k).
  \end{align*}
  Therefore, $\{y(k)\}$ is a submartingale w.r.t.\ the filtration
  $\{\F_k\}$. Also, since the policy guarantees almost sure safety and $S$ is
  bounded,  $\{y(k)\}$ is a bounded martingale and hence by
  Theorem~\ref{thrm:MCT}, it is convergent almost surely.
  Let us define the stopping time $T$ as follows: 
  \begin{align*}
    T:=\inf\{k\geq 0\mid y(k)\in B(x_t,\epsilon)\}.
  \end{align*} 
  Since $\{y(k)\}$ is a bounded submartingale, $\{y(k\wedge T)\}$ would be a
  bounded submartingale and it is convergent almost surely, i.e.,  
  \begin{align}
    y=\lim_{k\to\infty}y(k \wedge T)
  \end{align}
  exists almost surely.
  Hence by Lemma~\ref{lemma:stoppingtime} we get that
  \[E(y)=\lim_{k\to\infty}E(y(k\wedge T))\geq E(y(0))=x_s.\]
  In particular, if we let $p=P(y\in B(x_t,\epsilon))$, then 
  \[x_s\leq E(y)\leq (x_t+\epsilon)p+(1-p)b,\]
  and hence 
  \[p\leq \frac{b-x_s}{b-(x_t+\epsilon)}<1.\]
  Therefore almost-sure convergence is impossible.  
	
  The impossibility of almost sure $\e$-reachability for the case that \smms{}
  does not contain a mode $\mp$ with $E(\mp)>0$ follows from the same argument
  presented above.
	
  (Sufficiency.) To show the sufficiency part of the theorem, let $x_s\in
  (a,b)$ be any given 
  initial condition and $x_t\in (a,b)$ be any given target point and let
  $\epsilon>0$. We assume that $x_t+\epsilon<b$. If this condition is not met,
  we replace $\epsilon$ with $\tilde{\e}=(b-x_t)$ in the following
  argument (note that if $x(t)$ is in $B(x_t,\tilde{\e})$ it will also belong to
  $B(x_t,\e)$ as $B(x_t,\tilde{\e})\subseteq B(x_t,{\e})$).
  
   Recall that without loss of
  generality we may assume that $a=0$ (so that the length of $S$ is $b$) and
  $x_s<x_t$. Consider the constant control policy for mode selection $\{\mp\}$
  and the dwelling time sequence $\{d(k)\}$ be as follows: 
  \begin{align}\label{eqn:cp1d}	
    d(k)=\delta y(k-1),
  \end{align}
  for a constant $\delta<\min(\overline{\mu}_{+}/2,\epsilon/(b-a))$. We show that
  $y(k)$ enters $B(x_t,\epsilon)$ with probability 1 while maintaining
  safety. One can show inductively that almost surely the policy
  \eqref{eqn:cp1d} guarantees $x(t)>0$ for all $t\geq 0$. Also, note that  
  \begin{align*}
    y(k) &=y(k-1)+d(k)Z(k) \\
    &\leq y(k-1)+\frac{\epsilon}{b-a}Z(k)\leq y(k-1)+\epsilon,
  \end{align*} 
  where $Z(k)$ is sampled from $\mp$.  Note that 
  \begin{align*}
    y(k)
    &= y(k-1)+\delta y(k-1)Z(k) \\
    &= y(k-1)(1+\delta Z(k)) = y(0)\prod_{i=1}^{k}(1+\delta Z(i)),
  \end{align*}
  and therefore, 
  \begin{align}
    \log(y(k))=\log(y(0))+\sum_{i=1}^k\log(1+\delta Z(i)).
  \end{align}
  
  Note that  $Z(i)$-s are i.i.d.\ random variables and hence $\log(1+\delta
  Z(i))$-s are i.i.d.\ random variables.
  By Lemma~\ref{lemma:Elog} and the choice of $\delta$, it follows that
  \[
  E(\log(1+\delta Z(i)))>0.
  \]
  Therefore, invoking the
  \emph{strong law of large numbers} (cf. Theorem
  2.4.1.\ \cite{durrett2010probability}), it follows that
  \[
  \lim_{k\to\infty}\log(y(k))=\infty
  \]
  and we have $y(k)>x_t$ with probability one for some $k$ (depending on
  $\omega\in \Omega$). Finally, since
  \[
  y(k)\leq  y(k-1)+\epsilon,
  \]
  therefore, almost surely  $y(k)\in B(x_t,\epsilon)$
  for some $k(\omega)$.  
The proof is now complete.  
\end{proof}


\section{Higher-Dimensional Dynamics}
\label{sec:general}
In this section, we prove the
extension of Theorem~\ref{thrm:1dcase} for an arbitrary open, bounded, and
path-connected safety set in $\R^n$.  Namely, we show that in order
to drive a system from any starting point to any target point, for any
direction $v$ there must exist a stochastic mode $\mu$ such that we can positively move along $v$ in expectation using $\mu$, i.e., $v\cdot
\bar{\mu}>0$.
So, the main result of this section is as follows. 
\begin{theorem}\label{thrm:mdcase}
  Let $\M=\{\mu_1,\ldots,\mu_{\gamma}\}$ be a stochastic multi-mode system with
  a finite set of compactly supported distributions and let $S\subset \R^n$ be a
  path-connected and bounded open safety set.
  The following statements are equivalent:  
  \begin{enumerate}[a.]
  \item \label{item:eas}
    $\M$ satisfies the $\epsilon$-reachability property
    with safety set $S$.
  \item \label{item:ee}
    The expected multi-mode system $\overline{\M}$ of $\M$ satisfies
    the $\e$-reachability property with safety set $S$.
  \item \label{item:eav}
    For every non-zero vector $v \in \R^n$, there exists
    a mode $\mu \in \M$ such that $\overline{\mu} \cdot v>0$.
  \end{enumerate}
\end{theorem}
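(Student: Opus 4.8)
The plan is to prove the equivalence of \textbf{(a)} and \textbf{(c)} for an arbitrary $\M$, and then obtain $\textbf{(b)}\Leftrightarrow\textbf{(c)}$ for free: statement \textbf{(b)} is precisely statement \textbf{(a)} applied to the deterministic system $\overline{\M}$, whose modes are the Dirac masses at $\overline{\mu}_1,\ldots,\overline{\mu}_\gamma$, and since the expected vector of such a Dirac mass is $\overline{\mu}_i$ itself, condition \textbf{(c)} for $\overline{\M}$ coincides verbatim with condition \textbf{(c)} for $\M$. Thus $\textbf{(a)}\Leftrightarrow\textbf{(c)}$ applied to $\overline{\M}$ yields $\textbf{(b)}\Leftrightarrow\textbf{(c)}$, and all three are equivalent. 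Before starting I would record the geometric meaning of \textbf{(c)}: the condition that for every $v\neq 0$ some mode has $\overline{\mu}\cdot v>0$ is equivalent to $0$ lying in the interior of $\mathrm{conv}\{\overline{\mu}_1,\ldots,\overline{\mu}_\gamma\}$, and, by compactness of the unit sphere together with continuity of $u\mapsto\max_i\overline{\mu}_i\cdot u$, to the existence of a uniform constant $c>0$ with $\max_i \overline{\mu}_i\cdot u\ge c$ for every unit vector $u$. This $c$, with the support bound $L_{\M}$, drives the quantitative estimates.

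For necessity $\textbf{(a)}\Rightarrow\textbf{(c)}$ I would argue by contraposition, lifting the necessity argument of Theorem~\ref{thrm:1dcase} from a coordinate to an arbitrary direction. If \textbf{(c)} fails there is $v\neq 0$ with $\overline{\mu}_i\cdot v\le 0$ for all $i$; using openness of $S$ fix $x_s\in S$, a target $x_t\in S$, and a small $\epsilon>0$ with $x_t\cdot v>x_s\cdot v+\epsilon\|v\|$. For any causal safe policy put $\alpha(k):=-\,y(k)\cdot v$; since $m(k),d(k)$ are $\F_k$-measurable and $E(\eta_{k+1}\mid\F_k)=\overline{\mu}_{m(k)}$,
\[
E(\alpha(k+1)\mid\F_k)=\alpha(k)-d(k)\,\overline{\mu}_{m(k)}\cdot v\ge \alpha(k),
\]
so $\{\alpha(k)\}$ is a submartingale, bounded because safety confines $y(k)$ to the bounded set $S$. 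Stopping at $T:=\inf\{k:y(k)\in B(x_t,\epsilon)\}$ and applying Lemma~\ref{lemma:stoppingtime} to the bounded submartingale $\{\alpha(k\wedge T)\}$ bounds the success probability $p=P(y_\infty\in B(x_t,\epsilon))$ strictly below $1$, exactly as in the one-dimensional case, so \textbf{(a)} fails.

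The substantial direction is $\textbf{(c)}\Rightarrow\textbf{(a)}$. Since $S$ is open and path-connected I would join $x_s$ to $x_t$ by a polygonal path whose compact image lies at distance $\rho>0$ from $\partial S$, and place finitely many waypoints $w_0=x_s,\ldots,w_N=x_t$ with consecutive ones within $\rho/3$ and each $B(w_{j},\rho/2)\subseteq S$. The system is driven leg by leg. On the leg aimed at $w:=w_{j+1}$ take $\phi(y)=\|y-w\|^2$, at each step select (via $c$) a mode with $\overline{\mu}\cdot(w-y(k))\ge c\|w-y(k)\|$, and choose $d(k)>0$ small enough that (i) $L\,d(k)$ is below the distance from $y(k)$ to $\partial B(w,\rho/2)$, so each linear segment stays in $B(w,\rho/2)\subseteq S$ (almost-sure safety), and (ii) the inward first-order drift dominates the $O(d(k)^2)$ noise term. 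A short computation then gives $E(\phi(y(k+1))\mid\F_k)\le\phi(y(k))-\xi(k)$ with $\xi(k)\ge 0$ strictly positive whenever $y(k)$ is bounded away from $w$ and from $\partial B(w,\rho/2)$, so Corollary~\ref{cor:finitesum} (applied to the submartingale $-\phi$) gives $\sum_k\xi(k)<\infty$ almost surely. To turn this into hitting $B(w,r)$ I would exclude escape toward $\partial B(w,\rho/2)$ by a barrier: with $d(k)$ chosen as above, $g(y)=\bigl((\rho/2)^2-\|y-w\|^2\bigr)^{-1}$ is a nonnegative supermartingale, so by bounded expectation and the standard maximal inequality $\sup_k\|y(k)-w\|<\rho/2$ on every sample path. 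On $\{T=\infty\}$ the iterates then lie in a compact annulus bounded away from both $w$ and $\partial B(w,\rho/2)$, where $\xi(k)$ is bounded below by a positive constant, forcing $\sum_k\xi(k)=\infty$ --- a contradiction. Hence each leg reaches $B(w_{j+1},r)$ in finitely many steps almost surely; taking $r\le\epsilon$ on the last leg and intersecting the $N$ probability-one events reaches $B(x_t,\epsilon)$, in finite total time since each leg uses finitely many finite dwell times.

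The main obstacle is exactly the tension inside the sufficiency argument between almost-sure safety and almost-sure progress: a dwell time large enough to keep the drift bounded below by $\lambda\phi$ may push a segment across $\partial S$, whereas a dwell time forced to shrink near the boundary weakens the drift and, a priori, lets $\phi$ be absorbed at distance $\rho/2$ with positive probability. Reconciling the two --- a single dwell-time rule giving deterministic per-step safety, a summable drift decrease, and a finite barrier ruling out escape --- is the crux, and I expect the barrier step (verifying that $g$ is a supermartingale under the chosen $d(k)$, which needs the inward first-order drift to beat both the quadratic variation of the noise and the curvature of $1/((\rho/2)^2-\|\cdot-w\|^2)$) to be the most delicate estimate. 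Everything else --- the reformulation of \textbf{(c)}, the martingale bookkeeping, and the reduction of \textbf{(b)} to \textbf{(a)} on $\overline{\M}$ --- is routine given the tools already assembled.
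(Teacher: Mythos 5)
Your overall architecture coincides with the paper's: the necessity direction (a)$\Rightarrow$(c) via the bounded super/submartingale $v\cdot y(k)$ is the same argument (yours is in fact slightly more careful, since you stop the process at the hitting time before comparing expectations), the reduction of (b) to (a) applied to $\overline{\M}$ is the same, and the reduction of a general path-connected open $S$ to a chain of overlapping balls along a path from $x_s$ to $x_t$ is the same (the paper uses a compactness/open-cover and intersection-graph argument where you use a polygonal path with waypoints; both are fine). The genuine divergence --- and the one place where your proposal has a gap --- is the core sufficiency step inside a single ball. The paper (Lemma~\ref{lemma:centerball}) uses the single Lyapunov function $\log(r^2-\|y(k)\|^2)$ together with the dwell time $d(k)=\delta(r^2-\|y(k)\|^2)$: because the dwell time is proportional to the room left, safety is deterministic, and because the conditional drift of the logarithm is bounded below by a constant $\xi>0$ off the target neighborhood (Lemma~\ref{lemma:Elog}), Corollary~\ref{cor:finitesum} yields $\sum_k 1_{A_k}<\infty$ directly; the logarithm plays the roles of your progress function and your barrier simultaneously.

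You instead split these roles between $\phi(y)=\|y-w\|^2$ and the barrier $g(y)=\bigl((\rho/2)^2-\|y-w\|^2\bigr)^{-1}$, and the barrier claim is both unverified and, as literally stated, false: at or near the waypoint $w$ the chosen mode's inward drift vanishes while the quadratic-variation term $d(k)^2E(\|Z(k)\|^2)$ does not, so $E(\|y(k+1)-w\|^2\mid\F_k)>\|y(k)-w\|^2$ there, and by Jensen $E(g(y(k+1))\mid\F_k)\geq 1/E\bigl((\rho/2)^2-\|y(k+1)-w\|^2\mid\F_k\bigr)>g(y(k))$; that is, $g$ is a strict submartingale in a neighborhood of $w$. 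The claim can be repaired by applying it only to the process stopped on first entry to $B(w,r)$ (so $\|y(k)-w\|\geq r$ wherever the inequality is needed) and by taking $d(k)$ proportional to $(\rho/2)^2-\|y(k)-w\|^2$, so that the first-order inward drift, bounded below by $2cd(k)r$ on the annulus, dominates both the second-order noise term and the convexity correction of $x\mapsto 1/x$. But that computation is exactly the delicate estimate you defer, and it is precisely the content of the paper's $\log$-Lyapunov calculation. (Minor: ``on every sample path'' after the maximal inequality should read ``almost surely.'') In short: right skeleton and correct necessity, but the decisive drift-plus-barrier estimate inside the ball is asserted rather than proved, and the barrier as written needs the stopping-time restriction to even be true.
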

\begin{figure*}[ht]
	\def\myscale{0.8}
	\centering
	\subfloat[]{\label{fig:movecentere}
		\begin{tikzpicture}[->,>=stealth',shorten >=1pt,auto,node distance=0.5cm,
		semithick,scale=\myscale,every node/.style={transform shape}]
		\draw[] (0, 0) circle (2.5cm);
		\draw node (r1l) at (0.2,0.2) {$x_0$};
		\draw node (r2l) at (-0.25, -1.15) {$y_k$};
		\draw[-,dotted] (0, -1) -- node[below] {$\sqrt{\alpha(k)}$} (2.3, -1);
		\draw[dotted,->] (0, 0) -- node[right, pos=0.5] {$~~r$} (2.3, -1);
		\draw node (r1) at (0, 0) {};
		\draw node (r2) at (0, -1){};
		\draw[fill=black] (r1) circle(0.05);
		\draw[fill=black] (r2) circle(0.05);
		\draw[dotted] (0, 0) circle (0.5cm);
		
		\draw[-,dotted] (-3, 0) -- (3, 0);
		\draw[-,dotted] (0, -2.8) -- (0, 2.8);
		\end{tikzpicture}}%
	\subfloat[]{\label{fig:movecircle}
		\begin{tikzpicture}[->,>=stealth',shorten >=1pt,auto,node distance=0.5cm,
		semithick,scale=\myscale,every node/.style={transform shape}]
		\def\ra{0.4}
		\draw[] (0, 0) circle  (2.5cm);
		\draw node (r1l) at (-1.6, -0.7) {$x_s$};
		
		\draw node (r1) at (-1.6, -0.2) {};
		\draw[fill=black] (r1) circle(0.05);
		\draw[dashed] (r1) circle(\ra);
		
		\draw node (rdl) at (1.6, -0.7) {$x_t$};
		\draw node (rd) at (1.6,-0.2){};
		\draw[fill=black] (rd) circle(0.05);
		\draw[dashed] (rd) circle(\ra);

		\foreach \i in {0,...,8}
		{
			\draw node (\i) at (-1.6+\ra*\i, -0.2) {$u_{\i}$};
			\draw[fill=gray!30,color=gray] (\i) circle(0.05);
			\draw[dashed,color=gray] (\i) circle(\ra);
		}
		\draw[-,opacity=0.5,dotted] (-3, 0) -- (3, 0);
		\draw[-,opacity=0.5,dotted] (0, -2.8) -- (0, 2.8);
		\end{tikzpicture}}
	\subfloat[]{\label{fig:necessity}
		\begin{tikzpicture}[->,>=stealth',shorten >=1pt,auto,node distance=0.5cm,
		semithick,scale=\myscale,every node/.style={transform shape}]
		\draw[-, color=black!20, dashed] (0,0) -- (-2.5,-2.5);
		\draw[->, color=black, thick] (0,0) -- (2.5,2.5);
		
		\draw[-,decorate,decoration={brace,amplitude=4pt,mirror,raise=1pt},yshift=0pt] (0.4,-0.4) -- node[below] {$~~~~~~\frac{\delta}{2}$} (1.5, 0.7);
		
		\draw[-,dotted] (-3, 0) -- (3, 0);
		\draw[-,dotted] (0, -2.8) -- (0, 2.8);
		
		\draw[->] (0, 0) -- (-2, -0.5);
		\draw[-,thick,color=black!30] (0, 0) -- node [color=black,right, pos=0.8] {$\alpha(k)$} (-1.3, -1.3);
		
		\draw node (r1l) at (-0.7,0.2) {$y(0)= x_s$};
		
		\draw node (yll) at (-2, -0.3) {$y(k)$};
		\draw node (y1) at (-2, -0.5) {};
		\draw node (r2l) at (1.2, 0.9) {$x_t$};
		\draw node (vl) at (2, 1.5) {$\vec{v}$};
		\draw node (r1) at (0, 0) {};
		\draw node (r2) at (1, 1){};
		\draw[fill=black] (r1) circle(0.05);
		\draw[fill=black] (r2) circle(0.05);
		\draw[dotted] (1, 1) circle (0.6cm);
		\draw[dotted,->] (1, 1) -- node[right, pos=0.7] {$~\frac{\delta}{4}$} (0.5, 1.5);
		\end{tikzpicture}}
	\caption{Stochastic multi-mode system: proof sketch.\label{fig:smms-proof}}
\end{figure*}
As for 1-dimensional dynamics, we prove \ref{item:eas} $\Rightarrow$
\ref{item:eav} by a martingale argument. For the converse,
we break the problem into sub-problems: we show that if condition
\ref{item:eav} holds, then  
\begin{enumerate}[i.]
\item If the safety set $S$ is a ball, the controller can reach the
  $\epsilon$-neighborhood of the center of the ball from any starting point
  $x_s\in S$ (Lemma~\ref{lemma:centerball}). 
\item If the safety set $S$ is a ball, the controller can reach the
  $\epsilon$-neighborhood of any point $x_t\in S$ from any starting point
  $x_s\in S$ (Lemma~\ref{lemma:Sball}). 
\item For any path-connected open set $S$, the controller can traverse
  from any starting
  point $x_s$ to any $\e$-neighborhood of any target point $x_t\in S$ by moving
  inside a sequence of balls that are strictly within the safety set.
\end{enumerate}
We proceed by formulating and proving the above intermediate steps.

 \begin{lemma}\label{lemma:centerball}
   Let $\M=\{\mu_1,\ldots,\mu_{\gamma}\}$ be a stochastic multi-mode system with 
   a finite set of compactly supported distributions and let the safety set $S$ be a ball
   $B(x_0,r)\subset \R^n$. 
   If condition \ref{item:eav} of Theorem~\ref{thrm:mdcase} holds then, for
   any starting point $x_s\in S$ and any $\epsilon>0$, the system can reach
   the $\e$-neighborhood of the center $x_t=x_0$ almost surely.
\end{lemma}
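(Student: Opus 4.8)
The plan is to use the squared distance to the center, $\alpha(k) := \|y(k) - x_0\|^2$, as a Lyapunov function and to show that, under a suitable causal policy, $\{\alpha(k)\}$ is a bounded supermartingale that is driven to $0$ almost surely. First I would extract from condition~\ref{item:eav} a \emph{uniform} inward push. For a unit vector $u$, the quantity $f(u) := \max_i \overline{\mu}_i \cdot u$ is a maximum of finitely many linear functions, hence continuous, and by~\ref{item:eav} it is strictly positive on the compact unit sphere; therefore $\theta := \min_{\|u\|=1} f(u) > 0$. Thus at any state $y(k)\neq x_0$ I can pick the mode $\mu_{m(k)}$ whose expected direction satisfies $\overline{\mu}_{m(k)} \cdot u \ge \theta$ for $u = (x_0 - y(k))/\|x_0 - y(k)\|$, so that $(y(k)-x_0)\cdot\overline{\mu}_{m(k)} \le -\theta\sqrt{\alpha(k)}$.

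Next I would compute the one-step drift. Since $m(k)$ is $\F_k$-measurable, $E(\eta_{k+1}\mid\F_k)=\overline{\mu}_{m(k)}$, and $\|\eta_{k+1}\|\le L_{\M}$ almost surely, expanding the square gives
\[
E(\alpha(k+1)\mid\F_k) \le \alpha(k) - 2 d(k)\theta\sqrt{\alpha(k)} + d(k)^2 L_{\M}^2 .
\]
Two constraints pin down the dwelling time $d(k)$. For safety, the convexity of $t\mapsto\|x(t)-x_0\|$ on each interval means it suffices that $y(k+1)$ stay inside the ball; since the worst-case displacement has norm $d(k)L_{\M}$, the choice $d(k)\le \tfrac12(r-\sqrt{\alpha(k)})/L_{\M}$ forces $\|y(k+1)-x_0\|\le \tfrac12(r+\sqrt{\alpha(k)})<r$ almost surely and preserves the invariant $\sqrt{\alpha(k)}<r$. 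For a strictly negative drift I also want $d(k) < 2\theta\sqrt{\alpha(k)}/L_{\M}^2$. I would therefore set $d(k)$ to a small constant times the minimum of these two bounds; then $E(\alpha(k+1)\mid\F_k)\le\alpha(k)-\xi(k)$ with $\xi(k):= d(k)\big(2\theta\sqrt{\alpha(k)}-d(k)L_{\M}^2\big)>0$ whenever $\alpha(k)>0$. Now $\{\alpha(k)\}$ is a supermartingale bounded in $[0,r^2]$, so $\{-\alpha(k)\}$ is a bounded submartingale and Corollary~\ref{cor:finitesum} yields $\sum_k\xi(k)<\infty$ almost surely, while Theorem~\ref{thrm:MCT} gives $\alpha(k)\to\alpha_\infty$ almost surely.

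It remains to identify $\alpha_\infty$. On the event $\{\alpha_\infty\in(p,q)\}$ for fixed $0<p<q<r^2$, eventually $\sqrt{\alpha(k)}$ is bounded away from both $0$ and $r$, so $d(k)$ and hence $\xi(k)$ are eventually bounded below by a positive constant, contradicting $\sum_k\xi(k)<\infty$; a countable union over rational $p,q$ then shows $\alpha_\infty\in\{0,r^2\}$ almost surely. The one remaining case, and what I expect to be the main obstacle, is to rule out $\alpha_\infty=r^2$, i.e.\ the possibility that the trajectory converges to the boundary while the dwelling times, forced to shrink there by the safety cap, make the inward progress vanishingly slow. Here I would analyze the distance-to-boundary $w(k):=r-\sqrt{\alpha(k)}>0$ in the regime where $w(k)$ is small. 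Writing the relative increment $R(k):=(w(k+1)-w(k))/w(k)$, the safety cap (carrying the small prefactor) gives $|R(k)|$ small, while the inward drift gives $E(R(k)\mid\F_k)\ge c>0$ for a constant $c$ depending on $\theta,L_{\M},r$. A Taylor estimate of the form used in Lemma~\ref{lemma:Elog} then shows $E(\log(1+R(k))\mid\F_k)>0$, so $\{\log w(k)\}$ is a submartingale in this regime; being bounded above by $\log r$, it converges almost surely to a finite limit, which is incompatible with $w(k)\to 0$. Hence $P(\alpha_\infty=r^2)=0$, so $\alpha(k)\to 0$ almost surely, the stopping time $\inf\{k:\alpha(k)<\epsilon^2\}$ is finite almost surely, and the trajectory enters $B(x_0,\epsilon)$ in finite time while remaining in $S$.
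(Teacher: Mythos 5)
Your proof is correct in substance but follows a different decomposition from the paper's. You take the squared distance to the center, $\|y(k)-x_0\|^2$, as the Lyapunov function and establish a negative drift via Corollary~\ref{cor:finitesum}; the price is that the dwelling time must be capped by the distance to the boundary for safety, which leaves open the degenerate limit $\alpha_\infty=r^2$, and you must then run a second, separate argument on $w(k)=r-\|y(k)-x_0\|$ to exclude convergence to the boundary. The paper avoids this two-stage structure by choosing as its Lyapunov function $\alpha(k):=r^2-\|y(k)\|^2$ (which is comparable to $2r\,w(k)$ near the boundary) together with the dwelling time $d(k)=\delta\left(r^2-\|y(k)\|^2\right)$: this single quantity simultaneously encodes safety and progress toward the center, satisfies a multiplicative one-step recursion, and $\log\alpha(k)$ becomes a submartingale with a uniform positive increment $1_{A_k}\xi$ whenever $y(k)\notin B(x_0,\epsilon)$, so a single application of Lemma~\ref{lemma:Elog} and Corollary~\ref{cor:finitesum} yields $\sum_k 1_{A_k}<\infty$ and finishes the proof. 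Note that what you flag as ``the main obstacle''---the log-submartingale analysis of $w(k)$ with $|R(k)|$ small and $E(R(k)\mid\F_k)$ bounded below---is precisely the paper's central trick in disguise; once you have it, the preceding supermartingale analysis of $\|y(k)-x_0\|^2$ is, in hindsight, dispensable, since the same multiplicative argument applied to $r^2-\|y(k)\|^2$ works at every state, not just near the boundary. To make your last step fully rigorous you still need to localize: the submartingale property of $\log w(k)$ holds only while $w(k)$ stays below a threshold, so you should stop the process upon exiting that regime and take a countable union over the index from which it never exits; this is routine but should be said.
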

\begin{proof}
  Let $\M=\{\mu_1,\ldots,\mu_{\gamma}\}$ be a stochastic multi-mode system with
  a finite set of compactly supported distributions.  Suppose that the safety
  set $S$ is  $B(x_0,r)\subset \R^n$ and w.l.o.g.\ assume that
  $x_0=0$ (see, Figure~\ref{fig:movecentere}).
  Assume that $x_t=x_0=0$ (i.e., $x_t$ is the center of the safety ball) and
  assume condition \ref{item:eav} of Theorem~\ref{thrm:mdcase} holds.
  We show that for any starting point $x_s\in S$ and any $\epsilon>0$, there is
  a control policy to steer the system to the $\e$-neighborhood of $x_0$ almost
  surely while staying within the safety set $S$. 

  Without loss of generality assume that $\e\leq r$. Let $L$ be
  $L_{\M}$, the maximum support of the modes in $\M$. We use the 
  following controller policy for the $\e$-reachability of $x_t$:\\ 
  At iteration $k$, if $y(k)\not \in B(0,\epsilon)$, we let:  
  \begin{align}\label{eqn:policymd}
    m(k)&\in \argmax_{i\in[m]}\overline{\mu}_i\cdot (-y(k)),\cr 
    d(k)&={\delta}(r^2-\|y(k)\|^2),
  \end{align}
  for a sufficiently small positive constant $\delta$ satisfying
  \begin{align}\label{eqn:delta1}
    \delta< \min(\frac{1}{L},\frac{1}{2Lr}).
  \end{align} 
  We require $\delta$ to satisfy further inequalities that will be discussed later. If $y(k)\in
  B(0,\epsilon)$, we simply let $d(k)=0$ (or in other words, we stop the
  process). Let us denote the event $y(k)\not \in B(0,\epsilon)$ by $A_k$.  
  
	If $\|y(k)\|<r$, then for any $\delta\in (0,1/(2Lr))$ and on $A_k$,
	almost surely we  have:  
	\begin{align*}
	\|y(k+1)\|&=\|y(k)+d(k)Z(k)\|\cr
	& \leq \|y(k)\|+d(k)\|Z(k)\|\cr 
	&< \|y(k)\|+\delta(r^2-\|y(k)\|^2)L \cr
	&= \|y(k)\|+\delta(r-\|y(k)\|)(r+\|y(k)\|)L\cr 
	&< \|y(k)\|+(r-\|y(k)\|)\delta 2rL\cr
	&< \|y(k)\|+(r-\|y(k)\|)= r,
	\end{align*}
	where the last inequality follows from $\delta<\frac{1}{2L}$ and $\|y(k)\|<r$. Hence, inductively, policy \eqref{eqn:policymd} satisfies safety
	almost surely.

	By expanding $\|y(k+1)\|^2=(y(k)+d(k)Z(k))\cdot (y(k)+d(k)Z(k))$, we get: 
	\begin{align*}
	\|y(k+1)\|^2&=\|y(k)\|^2+1_{A_k}\Big({\delta}^2(r^2-\|y(k)\|^2)^2\|Z(k)\|^2\cr
	&\qquad\qquad+2{\delta}(r^2-\|y(k)\|^2)y(k)\cdot Z(k)\Big) .
	\end{align*}
	Subtracting both sides of the above equality from $r^2$, we get:
	\begin{align*}
	&r^2-\|y(k+1)\|^2= (r^2-\|y(k)\|^2)\cr
	&\qquad -1_{A_k}(r^2-\|y(k)\|^2)\times \cr 
	&\qquad\qquad\left({\delta}^2(r^2-\|y(k)\|^2)\|Z(k)\|^2+2{\delta}y(k)\cdot
	Z(k)\right). 
	\end{align*}
	
	Letting $\alpha(k):=(r^2-\|y(k)\|^2)$ (see, Figure~\ref{fig:movecentere}), this equality simplifies to:
	\begin{align}\label{eqn:ykineq2}
	&\alpha(k+1)\cr 
	&= \alpha(k)(1-1_{A_k}({\delta}^2(r^2-\|y(k)\|^2)\|Z(k)\|^2+2{\delta}y(k)\cdot Z(k)))\cr 
	&\geq \alpha(k)\left(1-1_{A_k}({\delta}^2r^2L^2+2{\delta}y(k)\cdot Z(k))\right).
	\end{align}
	
	Note that $g(v)=v\cdot\overline{\mu}_i $ is a continuous functional on $\R^n$
	therefore, the function $v\to\max_{i\in\M}v\cdot \overline{\mu}_i$ is a
	continuous function. Since, the set $\{v\in \R^n\mid \|v\|=1\}$ is a compact
	set in $\R^n$, we get  
	\[\lambda:=\inf_{v\in \R^n: \|v\|=1}\max_{i\in[\gamma]}v\cdot \overline{\mu}_i>0.\]
	Let $p(k):=2y(k)\cdot Z(k)+\epsilon\lambda$. Then, \eqref{eqn:ykineq2} simplifies to: 
	\begin{align}\label{eqn:ykineq3}
	\alpha(k+1)&\geq  \alpha(k)(1+1_{A_k}(-{\delta}^2r^2L^2+\delta \epsilon\lambda-\delta p(k)).
	\end{align}	 
	Note that for 
	\begin{align}\label{eqn:delta2}
	\delta\leq \frac{\e\lambda}{r^2L^2},
	\end{align} 
	we have $-{\delta}^2r^2L^2+\delta \epsilon\lambda\geq  0$. 
	Therefore, for a $\delta$ satisfying \eqref{eqn:delta1} and \eqref{eqn:delta2}, we have: 
	\begin{align*}
	\alpha(k+1)&\geq \alpha(k)(1-\delta 1_{A_k}p(k)).
	\end{align*}	 
	Applying $\log(\cdot)$ on both sides of the above inequality, we get:
	\begin{align*}
	\log(\alpha(k+1))&\geq \log(\alpha(k))+1_{A_k}\log(1-\delta p(k)).
	\end{align*}	 
	Note that on $A_k$,
	\begin{align}\label{eqn:pkbound}
	|p(k)|\leq 2\|y(k)\|\|Z(k)\|+\epsilon\lambda\leq 2rL+\epsilon\lambda. 
	\end{align}
	Also, 
	\begin{align*}
	E(-p(k)\mid \F_k)&=(E(2(-y(k))\cdot Z(k)-\epsilon\lambda\mid \F_k))\cr 
	&\geq (2\|y(k)\|\lambda -\epsilon\lambda) 
	\geq (2\epsilon\lambda -\epsilon\lambda)=\epsilon\lambda>0,
	\end{align*}
	where the first inequality follows from the choice of $m(k)$ in \eqref{eqn:policymd}
	and the definition of $\lambda$.  
	Since $E(p(k)\mid \F_k)>0$ and $|p(k)|\leq 2rL+\epsilon\lambda$, by
	Lemma~\ref{lemma:Elog}, for  
	\begin{align}\label{eqn:delta3}
	\delta\leq \frac{\epsilon\lambda}{4rL+2\epsilon\lambda}
	\end{align}
	that also satisfies \eqref{eqn:delta1} and \eqref{eqn:delta2}, we have:
	\begin{align}\label{eqn:delta3}
	  E(\log(1 + & 1_{A_k}\delta p(k)) \mid \F_k) \cr
          &= 1_{A_k}E(\log(1+\delta p(k))\geq 1_{A_k}\xi, 
	\end{align} 
	for some $\xi>0$.
	Therefore, for such a small $\delta$, we have:
	\begin{align*}
	E(\log(\alpha(k+1))\mid \F_k)\geq \log(\alpha(k))+1_{A_k}\xi. 
	\end{align*}
	Since, $\alpha(k+1)$ is bounded, by Corollary~\ref{cor:finitesum}, it follows that 
	\[\sum_{k=0}^\infty 1_{A_k}<\infty,\] almost surely. 
	Therefore, almost surely, the trajectories of the dynamics will enter
	$B(x_t,\e)=B(0,\e)$. 
\end{proof}
Using this result, the next step is to show that the controller can achieve almost sure $\e$-reachability for the safety set being a ball (assuming the conditions of Lemma~\ref{lemma:centerball}). 

\begin{lemma}\label{lemma:Sball}
  Consider a stochastic multi-mode system with a finite set
  $\M=\{\mu_1,\ldots,\mu_{\gamma}\}$ of compactly supported 
  stochastic modes satisfying \eqref{item:eav}.
  If the safety set $S$ is a ball $B(x_0,r)\subset \R^n$, then the \smms{}
  satisfies the almost sure $\e$-reachability property with the safety set $S=B(x_0,r)$.   
\end{lemma}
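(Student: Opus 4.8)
The plan is to reduce $\e$-reachability of an arbitrary target inside a ball to repeated applications of Lemma~\ref{lemma:centerball}, which by itself only lets us reach the \emph{center} of a ball. The key geometric idea, illustrated in Figure~\ref{fig:movecircle}, is to chain together a finite sequence of small balls whose centers march along the straight segment from $x_s$ to $x_t$, using each small ball as a temporary safety set.

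First I would fix the initial point $x_s\in S$, the target $x_t\in S$, and the tolerance $\e>0$. Since $S=B(x_0,r)$ is convex, the segment $[x_s,x_t]$ lies entirely in $S$; as this segment is compact and $S$ is open, its distance to the boundary of $S$ is some $\rho_0>0$. Set $\rho=\rho_0/2$, so that every ball of radius $\rho$ centered on the segment is contained in $S$. Next choose an intermediate target radius $\e'<\min(\e,\rho/2)$ and place finitely many points $u_0=x_s,u_1,\dots,u_N=x_t$ along the segment with $\|u_i-u_{i+1}\|<\rho/2$ for each $i$.

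The main step is an induction over $i=0,\dots,N-1$, where at the start of stage $i$ the system occupies a point $p$ within $B(u_i,\e')$ (with $p=u_0=x_s$ in the base case). I would invoke Lemma~\ref{lemma:centerball} with the temporary safety set $B(u_{i+1},\rho)\subseteq S$. Condition~\ref{item:eav} is a property of the modes alone, so it continues to hold and the lemma applies to this ball; moreover the current point lies in $B(u_{i+1},\rho)$ because $\|p-u_{i+1}\|\le\|p-u_i\|+\|u_i-u_{i+1}\|<\e'+\rho/2<\rho$. Hence the lemma yields a causal policy that reaches $B(u_{i+1},\e')$ almost surely in finite time while remaining inside $B(u_{i+1},\rho)\subseteq S$. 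Concatenating these $N$ policies produces a single causal policy; since each stage succeeds almost surely and there are only finitely many stages, the overall trajectory reaches $B(u_N,\e')=B(x_t,\e')\subseteq B(x_t,\e)$ in finite time without ever leaving $S$, almost surely.

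The only genuine subtlety I anticipate is the bookkeeping of the overlaps: I must confirm that the point reached at the end of one stage already lies in the safety ball used by the next stage (guaranteed by the choices $\e'<\rho/2$ and $\|u_i-u_{i+1}\|<\rho/2$) and that each temporary ball stays inside the global safety set (guaranteed by $\rho<\rho_0$ together with convexity of $S$). Beyond these elementary geometric checks, the argument is merely a finite composition of probability-one events, so no probabilistic machinery beyond Lemma~\ref{lemma:centerball} is required.
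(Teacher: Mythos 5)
Your proposal is correct and follows essentially the same route as the paper: both discretize the segment from $x_s$ to $x_t$ into finitely many closely spaced waypoints, apply Lemma~\ref{lemma:centerball} with a small ball centered at the next waypoint as a temporary safety set (checking that the overlap conditions place the current point inside that ball and the ball inside $S$), and concatenate the finitely many almost-sure stages. The only difference is cosmetic: you derive the working radius from the compactness of the segment, while the paper computes it explicitly as $\tilde{r}=\frac{1}{2}\min(r-\|x_s-x_0\|,r-\|x_t-x_0\|)$.
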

\begin{proof}
  Let $x_s$ and $x_t$ be arbitrary starting and target points in the safety set
  $S=B(x_0,r)$ for some $r\geq \epsilon>0$.
  Let
  \[
  \tilde{r}=\frac{1}{2}\min(r-\|x_s-x_0\|,r-\|x_t-x_0\|).
  \]
  Note that for any point $v\in\{\beta x_s+(1-\beta)x_t\mid \beta\in[0,1]\}\subset
  S$, the segment connecting $x_s$ and $x_t$ in $S$, $B(v,2\tilde{r})\subset
  S$ holds.

  Let $N=\lfloor(\|x_t-x_s\|)/\tilde{r}\rfloor+1$ and let
  $u_i=x_s{+}(i/N){.}(x_t{-}x_s)$ for any $i=0,\ldots, N$, where
  $z=\lfloor\beta\rfloor$ is the largest integer that satisfies $z\leq
  \beta$. Note that $u_0=x_s$ and $u_N=x_t$.
  These intermediate points are illustrated  in Figure~\ref{fig:movecircle}.
  
Let $\tilde{\e}< \min(\e,\tilde{r})$. Then, for any $u\in B(u_i,\tilde{\e})$, we have:
\begin{align*}
\|u_{i+1}-u\|&=\|u_{i+1}-u_i+u_i-u\|\cr 
&\leq \|u_{i+1}-u_i\|+\|u_i-u\|\leq \tilde{r}+\tilde{\e}<2\tilde{r},
\end{align*}
for $i=0,\ldots, N$. By Lemma~\ref{lemma:centerball}, we can move from any
point in the $\tilde{\e}$-neighborhood of $u_i$ to a point in
$\tilde{\e}$-neighborhood of $u_{i+1}$ satisfying the safety
$B(u_{i+1},2\tilde{r})\subset S$. Therefore, by induction, almost surely, we
can traverse from $u_0=x_s$ to $B(u_N=x_t,\tilde{\e})\subseteq B(x_t,\e)$
while satisfying safety $S$ almost surely. 
\end{proof}

Finally, we are in a position to complete the proof of the main result. 

\begin{proof}[of Theorem~\ref{thrm:mdcase}]
  Let $\M=\{\mu_1,\ldots,\mu_{\gamma}\}$ be a sto\-chastic multi-mode system with
  finite set of compactly supported distributions and let $S\subset \R^n$ be a
  path-connected and bounded open safety set.
  It suffices to show the equivalence of \ref{item:eas} and
  \ref{item:eav} as the same equivalence holds for the deterministic
  system and the directions of its modes.
  In particular we show that 
  $\M$ satisfies almost sure $\epsilon$-reachability property
  with the safety set $S$ if and only if for any $v\in \R^n$ there exists
  a mode $\mu \in \M$ such that $\overline{\mu} \cdot v>0$.
  
  (\ref{item:eas} $\Rightarrow$ \ref{item:eav})
  Suppose that \ref{item:eas}
  holds but \ref{item:eav} does not. Let $v\not=0$ be a vector such that
  for every $\mu\in \M$, $\overline{\mu}\cdot v\leq 0$.
  If a vector $v$ satisfies such a property, then the unit-length vector
  $\frac{v}{\|v\|}$ also does. So, without loss of
  generality we assume that $\|v\|=1$.
  Let $x_s\in S$ be an arbitrary starting point.
  Since $S$ is an open set, there exists a $\delta>0$ such that
  $B(x_s,\delta)\subset S$.
  Let $x_t=x_s+\frac{\delta}{2}v$ and let $\epsilon=\frac{\delta}{4}$ (see Figure~\ref{fig:necessity}). Now,
  consider an arbitrary causal control policy $(m(k),d(k))$ and let $\{y(k)\}$ be
  defined as in \eqref{eqn:location} and $\{\F_k\}$ be the
  $\sigma$-algebra that is adapted to $\{y(k)\}$. Define
  $\alpha(k):=v\cdot y(k)$ for all $k\in \mathbb{Z}^+$. Note that
  $\{\alpha(k)\}$ is adapted to $\{\F_k\}$ and also,   
  \begin{align*}
    E(\alpha(k+1)\mid \F_{k}) &=E(v \cdot (y(k)+d(k)Z(k))\mid \F_{k})\\
    &=v \cdot y(k)+d(k)E(v \cdot Z(k)) \\
    &\leq v \cdot y(k)=\alpha(k),
  \end{align*}
  where $Z(k)$ is the random vector whose distribution is
  $\mu_{m(k)}$. Therefore, the sequence $\{\alpha(k)\}$ would be a
  supermartingale. Note that $S$ is bounded and hence, if the policy guarantees
  almost sure safety, this supermartingale is convergent to a random variable
  $\alpha$. Therefore,  
  \begin{align}\label{eqn:submar-md}
    E(\alpha)\leq E(\alpha(0))=v\cdot x_s.
  \end{align}
  On the other hand, for any $u\in B(x_t,\frac{\delta}{4})$, we have:
  \begin{align*}
    v\cdot u&=v\cdot (u-x_t+x_t)= v\cdot (u-x_t)+v\cdot x_t\cr 
    &\geq -\frac{\delta}{4}\|v\|^2+v\cdot (x_s+\frac{\delta}{2} v)=v\cdot x_s+\frac{
      \delta}{4}\|v\|^2 \\
    &>v\cdot x_s=E(\alpha(0)),
  \end{align*}
   where the inequality follows from the Cauchy-Schwartz inequality and the fact
  that $u-x_t\in B(0,\frac{\delta}{4})$. Therefore, if the sample paths almost
  surely reach $B(x_t,\frac{\delta}{4})$, we have $E(\alpha){>}E(\alpha(0))$ which
  contradicts \eqref{eqn:submar-md}.  
	
  (\ref{item:eav} $\Rightarrow$ \ref{item:eas}) Indeed this part of the Theorem applies for any path-connected open set $S$ (that is not necessarily bounded).  Let $S$ be an arbitrary path-connected and open set and let $x_s,x_t\in
    S$. Since $S$ is path-connected, there exists a continuous path $\nu:[0,1]\to
    S$ such that $\nu (0)=x_s$ and $\nu(1)=x_t$. For any $\theta\in [0,1]$, let
    $\rho_{\theta}>0$ be such that $B_{\theta}:=B(\theta,\rho_{\theta})\subset
    S$. Such $\rho_{\theta}$ exists because $S$ is an open set. Also, the image
    of $[0,1]$ under $\nu $, i.e., the set  
    \[\mathcal{C}:=\{\nu (\theta)\mid \theta\in [0,1]\} \]
    is a compact subset of $S$ as it is the image of a compact interval $[0,1]$
    under the continuous map $\nu$. Finally, the collection
    $I=\{B_{\theta}\}_{\theta\in[0,1]}$ is an open cover for $\mathcal{C}$,
    i.e.\ 
    \begin{align*}
      \mathcal{C}\subset \bigcup_{\theta\in[0,1]}B_{\theta}.
    \end{align*}
    By compactness of $\mathcal{C}$, there exists a finite open sub-cover of $I$
    that covers $\mathcal{C}$. In other words, there exists
    $\theta_1,\ldots,\theta_q\in [0,1]$ such that  
    \begin{align*}
      \mathcal{C}\subset B_{\theta_1}\cup B_{\theta_2}\cup \cdots\cup B_{\theta_q},
    \end{align*}
    for some finite number $q\in \mathbb{Z}^+$. Construct the undirected
    intersection graph $G=([q],E)$ where $[q]:=\{1,\ldots,q\}$ and  
    \[E=\left\{\{i,j\}\mid i,j\in[q], B_{\theta_i}\cap B_{\theta_j}\not=\emptyset\right\}.\]
    One can verify that the intersection graph $G$ should be connected, as otherwise, the set
    $\mathcal{C}$ would be a (path) disconnected set.  
    
    Without loss of generality assume that $x_s\in B_{\theta_1}$ and $x_t\in
    B_{\theta_q}$. Let $u_1=1\to u_2\to \cdots \to u_k=q$ be a directed path in
    $G$ that connects vertex $1$, which is associated with $B_{\theta_1}$, which
    contains $x_s$, to the vertex $q$, which is associated with $B_{\theta_q}$,
    which contains $x_t$. Now let $x_1=x_s$, $x_k=x_t$ and choose the points
    $x_i\in B_{\theta_{u_i}}\cap B_{\theta_{u_{i+1}}}$ for
    $i\in\{2,\ldots,k-1\}$. By Lemma~\ref{lemma:Sball}, there exists a control
    policy that starting from any starting point in 
    $B(x_i,\tilde{\e})\subset B_{\theta_{u_i}}$ the controller can move to some point in 
    $B(x_{i+1},\tilde{\e})\subset B_{\theta_{u_i}}$ while maintaining safety
    $B_{\theta_{u_i}}\subset S$ almost surely. Therefore, the controller can traverse from 
    $x_s$ to $x_t$ by concatenating these control policies while
    maintaining safety $S$ almost surely.  
  The proof is now complete.
\end{proof}


Note that the characterization in Theorem~\ref{thrm:mdcase} is
independent of the safety set as long as it is open, bounded, and
path-connected. Therefore, one may regard $\e$-reachability to be a
property of the \smms{} independent of the safety set (as long as the
latter satisfies those conditions).

Another observation about Theorem~\ref{thrm:mdcase} is that the bounded condition on
the safety set $S$ is absolutely required to prove that \ref{item:eav} is
necessary for $\e$-reachability. To show the importance of this condition let us
discuss a simple example.  
\begin{example}
  Consider the safety set $S=\mathbb{R}$ and the \smms{} $\M=\{\mu_1\}$ where
  \[
  P(\mu_1=+1)=P(\mu_1=-1)=\frac{1}{2}.
  \]
  Notice that this is the case of a simple random walk on $\mathbb{Z}$. 
  Consider the simple control policy $d(k)=\epsilon$ and $m(k)=1$ for all $k\geq 0$. 
  It can be shown that (cf.\ Theorem 4.1.2 in \cite{durrett2010probability})
  for any initial condition $y(0)=x(0)=x_s\in\mathbb{R}$:  
  \[\liminf_{k\to\infty}y(k)=-\infty \text{ and }  \limsup_{k\to\infty}y(k)=+\infty.\]
  Therefore, starting from any starting point $x_s$, this controller will almost surely visit the $\e$-neighborhood of 
  any target point $x_t\in \R$. Therefore, this \smms{} $\M$ satisfies almost reachability in $S=\R$. 
  
  However, the expected value of each mode is $\bar{\mu}_1=0$ which
  clearly does not satisfy \ref{item:eav}. The reason that
  Theorem~\ref{thrm:1dcase} and Theorem~\ref{thrm:mdcase} fail in this case is
  that the safety set is no longer a bounded set.  
\end{example}

Although the boundedness of the safety set $S$ is necessary to prove that \ref{item:eas} implies \ref{item:eav}, the proof of the reverse implication does not rely on the boundedness of the safety set. 

\begin{corollary}\label{cor:unbounded}
  Let $\M=\{\mu_1,\ldots,\mu_{\gamma}\}$ be a stochastic multi-mode system with
  a finite set of compactly supported distributions.
  Also, let $S\subset \R^n$ be a path-connected open set. Then, if for any
  non-zero $v\in \R^n$, there exists a mode $\mu\in \M$ such that
  $\bar{\mu}\cdot v>0$, the $\epsilon$-reachability property with
  safety set $S$ holds almost surely.  
\end{corollary}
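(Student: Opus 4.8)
The plan is to observe that this corollary is exactly the implication \ref{item:eav} $\Rightarrow$ \ref{item:eas} already established inside the proof of Theorem~\ref{thrm:mdcase}, and that the argument given there never invoked boundedness of the global safety set. So I would simply rerun that half of the proof, taking care to isolate the single place where compactness is used and to verify that it is compactness of the path image, not of $S$.

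Concretely, first I would use path-connectedness of $S$ to fix a continuous curve $\nu:[0,1]\to S$ with $\nu(0)=x_s$ and $\nu(1)=x_t$. Its image $\mathcal{C}=\{\nu(\theta)\mid\theta\in[0,1]\}$ is compact, being the continuous image of the compact interval $[0,1]$; this is the only compactness the proof needs, and it holds regardless of whether $S$ is bounded. Openness of $S$ then gives, for each $\theta$, a radius $\rho_\theta>0$ with $B(\nu(\theta),\rho_\theta)\subset S$, and these balls form an open cover of $\mathcal{C}$. Compactness of $\mathcal{C}$ extracts a finite subcover $B_{\theta_1},\ldots,B_{\theta_q}$, whose intersection graph $G$ is connected (otherwise $\mathcal{C}$ would be path-disconnected).

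Next I would traverse a path in $G$ from the ball containing $x_s$ to the ball containing $x_t$, pick overlap points $x_i\in B_{\theta_{u_i}}\cap B_{\theta_{u_{i+1}}}$, and drive the system from one overlap to the next. The crucial point is that each individual step takes place entirely inside a single ball $B_{\theta_{u_i}}$, which is bounded by construction even though $S$ need not be. Hence Lemma~\ref{lemma:Sball} (and through it Lemma~\ref{lemma:centerball}) applies verbatim within each ball, the supermartingale argument on $\alpha(k)=r^2-\|y(k)\|^2$ remaining valid because $\alpha(k)$ is bounded on that ball. Concatenating these finitely many controllers yields an almost-sure controller from $x_s$ to $B(x_t,\e)$ whose trajectory stays inside $\bigcup_i B_{\theta_i}\subset S$.

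The one subtlety worth naming as the main obstacle is to confirm that nowhere in Lemmas~\ref{lemma:centerball} and~\ref{lemma:Sball} does global boundedness of $S$ secretly enter: the boundedness those lemmas exploit is entirely local to the ball serving as their own safety set, and each such ball is bounded independently of $S$. Once this check is made, the corollary follows with no new computation, and indeed the remark preceding it already signals that the reverse implication of Theorem~\ref{thrm:mdcase} was stated for an arbitrary, not necessarily bounded, path-connected open set.
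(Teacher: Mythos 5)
Your proposal is correct and is essentially identical to the paper's own argument: the paper proves Corollary~\ref{cor:unbounded} by observing that the (\ref{item:eav} $\Rightarrow$ \ref{item:eas}) direction of Theorem~\ref{thrm:mdcase} only uses compactness of the path image $\mathcal{C}$ and boundedness of the individual covering balls, never boundedness of $S$ itself. Your explicit check that Lemmas~\ref{lemma:centerball} and~\ref{lemma:Sball} rely only on the bounded ball serving as their local safety set is exactly the justification the paper intends.
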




\section{Algorithms} 
\label{sec:complexity}
\begin{algorithm}[t]\normalsize
  \DontPrintSemicolon
  \KwIn{An $n$-dimensional \smms{} $\M = \{\mu_1, \mu_2, \ldots, \mu_\gamma\}$,
    starting point $x_s$, target point $x_t$, open and path-connected safety set $S$, and
    precision $\varepsilon > 0$} 
  \KwOut{Dynamic reachability algorithm to reach $\varepsilon$ neighborhood of
    $x_t$ using \smms{} $\M$.}

  \If {\textsc{Is\_Almost-Sure\_Reachable($\M$)} = \textsc{No}} {
    \Return  Can not guarantee almost-sure reachability\;
  }
  \Else{
  Compute continuous path $\nu: [0, 1] \to S$ from $x_s$ to $x_t$ using RRT or
  Canny's algorithm\;
  Let $B_1, B_2, \ldots, B_q$ be a finite set of open balls that covers the path
  $\nu$ and stays inside the safety set $S$, $B_i\cap B_{i+1}\not=\emptyset$, $x_s\in B_1$, and $x_t\in B_q$\;
  Let $x_0=x_s,x_1, \ldots, x_{q-1},x_q= x_t$ be set of points such that $x_i \in B_i \cap
  B_{i+1}$ for all $1 \leq i < q$  and $x_s \in B_1$ and $x_t \in B_q$\;
  Set $y(0) = x_s$\;
  Set $k = 0$\;
  \While{$\|y(k) - x_t\| > \varepsilon$}{
    $k := k + 1$\;
    $y(k) :=  \textsc{Reach\_In\_A\_Ball($\M, \varepsilon, y(k), x_k, B_k$)}$\;
  }
  \Return $y(k)$\;
  }
  \caption{\textsc{Reach\_In\_Arb\_PC\_Set}($\M, \varepsilon, x_s, x_t,S$)}
  \label{AlmostSure}
\end{algorithm}

Given a stochastic multi-mode systems $\M$, an arbitrary high-dimensional open-connected safety set
$S$, starting point $x_s \in S$, and target point $x_t \in S$, a typical
hierarchical motion planning procedure for stochastic multi-mode systems include
the following steps:
\begin{enumerate}       
\item
  (path-finding) find a path from $x_s$ to $x_t$,
\item
  (error-margin estimation) find a finite open cover for the path connecting
  $x_s$ to $x_t$ in $S$, and 
\item
  (path-following) compute the  control policy to steer the system from $x_s$ to
  an arbitrary neighborhood of $x_t$ while ensuring safety.
\end{enumerate}
Algorithm~\ref{AlmostSure} provides pseudocode for the this motion planning
problem that invokes Algorithms~\ref{AlmostSure2},~\ref{AlmostSure1},
and~\ref{algo:AlmostSure},  and a call to an off-the-shelf path-finding algorithm. 
There are well-established algorithms to explore non-convex,
high-di\-men\-sional spaces including the rapidly exploring random tree (RRT)
algorithm~\cite{rrt-plan}.
Intuitively, the RRT algorithm can return a path from the source to the destination 
by random exploration of the state space.  This path can be robustly followed by
repeated applications of our algorithm in the context of systems modeled as
\smms{}s by exploiting the fact that $S$ is an open set and the image
$\mathcal{C}$ of the path $\nu$ is compact, and hence find: 
\begin{align}\label{eqn:rstar}
r^*=\min_{x\in \mathcal{C},y\in S^c}\|x-y\|>0,
\end{align}
which exists due to the compactness of $\mathcal{C}$ and closedness of
$S^c=\R^n\setminus S$.
Also, note that if the discovered path is a piece-wise linear path and $S$ is
defined by a set of linear inequalities, then $r^*$ in \eqref{eqn:rstar} can be
lower-bounded by the smallest of the minimum distances of vertices of
$\mathcal{C}$ from the faces of the hyperplanes that define $S$.

\begin{algorithm}[t]\normalsize
  \DontPrintSemicolon
  \KwIn{An $n$-dimensional \smms{} $\M = \{\mu_1, \mu_2, \ldots, \mu_\gamma\}$,
    starting point $x_s$, target point $x_t = 0$, safety Ball $B(x_t, r)$ around
    $x_t$, and precision $\varepsilon > 0$}
  \KwOut{Dynamic reachability algorithm to reach $\varepsilon$ neighborhood of
    the center $x_t=0$ of a given Ball $B(x_t, r)$ using \smms{} $\M$.}

  Set $y(0) := x_s$\;
  Set $k = 0$\;
  Set $\delta = \min \left(\frac{1}{L}, \frac{1}{2Lr},
  \frac{\varepsilon\lambda}{r^2L^2},\frac{\epsilon\lambda}{4rL+2\epsilon\lambda}\right)$ where
  $L = L_{\M}$ is the maximum support of the modes in $\M$ and
  \[\lambda:=\inf_{v\in \R^n: \|v\|=1}\max_{i\in[\gamma]}v\cdot
  \overline{\mu}_i>0.\]
  
  \While{$\|y(k) - x_t\| > \varepsilon$}{
    $k := k + 1$\;
    $m(k) \in \argmax_{i\in[m]}\overline{\mu}_i\cdot (-y(k))$\; 
    $d(k) = {\delta}(r^2-\|y(k)\|^2)$\;
    $r(k) := \textsc{SenseCurrentRate}(y(k-1), m(k), d(k))$\;
    $y(k) := y(k-1) + d(k) \cdot r(k)$\;
  }
  \Return $y(k)$\;
  \caption{\textsc{Reach\_Center($\M, \varepsilon, x_s, x_t, r$)}}
  \label{AlmostSure2}
\end{algorithm}
\begin{algorithm}[t]\normalsize
  \DontPrintSemicolon
  \KwIn{An $n$-dimensional \smms{} $\M = \{\mu_1, \mu_2, \ldots, \mu_\gamma\}$,
    starting point $x_s$, target point $x_t$,  safety set $B(x_0, r)$, and
    precision $\varepsilon > 0$} 
  \KwOut{Dynamic reachability algorithm to reach $\varepsilon$ neighborhood of
    $x_t$ using \smms{} $\M$.}
  Set  $\tilde{r}=\frac{1}{2}\min(r-\|x_s-x_0\|,r-\|x_t-x_0\|)$\;
  Set $N=\lfloor\frac{\|x_t-x_s\|}{\tilde{r}}\rfloor+1$\;
  Set $y(0) = x_s$\;
  Set $k = 0$\;
  \While{$\|y(k) - x_t\| > \varepsilon$}{
    $k := k + 1$\;
    $u(k) := x_s+\frac{k}{N}(x_t-x_s)$\;
    $y(k) := \textsc{Reach\_Center}(\M,
    \min(\varepsilon, \tilde{r}),
    y(k-1), u(k),
    2\tilde{r}
    )$\;
  }
  \Return $y(k)$\;
  \caption{\textsc{Reach\_In\_A\_Ball($\M, \varepsilon, x_s, x_t,
      B(x_0, r)$)}}
  \label{AlmostSure1}
\end{algorithm}

Once $r^*$ is found, let $x_1,\ldots,x_k\in S$ be
such that $x_1=x_s$ and $x_k=x_t$ and
\[
\|x_{i}-x_{i+1}\|\leq r^*/2 \text{ for $i=1,\ldots,k-1$}.
\]
Then, $\{B(x_i,3r^{*}/4)\}_{i\in [k]}$ would be a cover satisfying
\[
B(x_i,3r^{*}/4)\cap B(x_{i+1},3r^{*}/4)\not=\emptyset
\]
and hence,  the proof technique of Theorem~\ref{thrm:mdcase} can be applied to
establish a safe routing from $x_s$ to $x_t$.
The steps 5 and 6 of Algorithm~\ref{AlmostSure} assume the existence of such
sequence of balls and repeatedly invoke Algorithm~\ref{AlmostSure1} to
accomplish reachability within a ball given the stochastic multi-mode system
satisfies $\varepsilon$-reachability property (Algorithm~\ref{algo:AlmostSure}).

Now we turn our focus to analyze computational complexity of deciding
$\varepsilon$-reachability property for path-connected and bounded open safety sets.
For the sake of algorithmic analysis of the problem we assume that, for a given
$\smms{}$, the distributions in all of the modes are computationally tractable,
i.e., the expected vector for each stochastic mode is rational and can be computed
in polynomial time.  
The following complexity result follows from the necessary and sufficient
condition developed in the previous sections. 

\begin{algorithm}[t]\normalsize
  \DontPrintSemicolon
  \KwIn{An $n$-dimensional \smms{} $\M = \{\mu_1, \mu_2, \ldots, \mu_\gamma\}$.}
  \KwOut{\textsc{Yes}, if the $\varepsilon$-reachability property
    holds for path-connected and bounded safety sets, and \textsc{No} otherwise.} 
  Compute the expected multimode system
  \[
  \overline{\M} = \{\overline{\mu_1},
  \overline{\mu_2}, \ldots, \overline{\mu_\gamma}\}\]
  
  Compute 
  \begin{align*}
  P:=\text{min}&\sum_{i=1}^\gamma \|\alpha_i\overline{\mu_i}\|_1 \cr 
  \text{ subject to: } &\alpha_i\geq 1\text{ for all $1\leq i \leq \gamma$}.
  \end{align*}

  \If{($\text{Rank}[\overline{\mu_1},
  	\overline{\mu_2}, \ldots, \overline{\mu_\gamma}]=n)$ and  $(P=0)$}{\Return \textsc{Yes}}
  
  \lElse{\Return \textsc{No}}
	  \caption{\label{algo:AlmostSure}\textsc{Is\_Almost-Sure\_Reachable($\M$)}}
\end{algorithm}

\begin{lemma}
  The decision version of the $\varepsilon$-reachability problem for
  stochastic multi-mode systems is in PTIME.
\end{lemma}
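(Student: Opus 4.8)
The plan is to show that the condition \ref{item:eav} of Theorem~\ref{thrm:mdcase}---which by that theorem characterizes $\varepsilon$-reachability for path-connected and bounded safety sets---is exactly what Algorithm~\ref{algo:AlmostSure} checks, and that this check runs in polynomial time. Concretely, I would argue that condition \ref{item:eav}, namely ``for every non-zero $v\in\R^n$ there is a mode $\mu\in\M$ with $\overline{\mu}\cdot v>0$,'' is equivalent to the conjunction of two linear-algebraic conditions: (1) the expected vectors $\overline{\mu_1},\ldots,\overline{\mu_\gamma}$ span $\R^n$ (i.e.\ their rank is $n$), and (2) the origin $\zero$ lies in the interior of their positive conical hull, equivalently $\zero$ is a strictly positive combination $\sum_i\alpha_i\overline{\mu_i}$ with all $\alpha_i>0$. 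The first is the rank test in the algorithm; the second is captured by the optimization value $P=0$ in the algorithm's second step.

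First I would establish the geometric equivalence. Condition \ref{item:eav} says that no non-zero $v$ is a ``separating direction,'' i.e.\ there is no closed halfspace $\{w\mid w\cdot v\le 0\}$ containing all the $\overline{\mu_i}$. By a standard separating-hyperplane / Farkas-type argument, the absence of any such separating $v$ is equivalent to saying the origin lies in the \emph{interior} of $\mathrm{cone}(\overline{\mu_1},\ldots,\overline{\mu_\gamma})$, the convex cone generated by the expected vectors. Having the origin in the interior of this cone forces two things: the cone must be full-dimensional (hence the vectors span $\R^n$, giving the rank condition), and $\zero$ must be expressible as $\sum_i \alpha_i\overline{\mu_i}$ with every $\alpha_i$ strictly positive. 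Conversely, rank $n$ together with a strictly-positive representation of $\zero$ places $\zero$ in the cone's interior, recovering \ref{item:eav}. I would note the subtlety that one needs \emph{strict} positivity of the coefficients, not merely nonnegativity, to land in the interior rather than on the boundary; this is why the algorithm imposes $\alpha_i\ge 1$ and minimizes $\sum_i\|\alpha_i\overline{\mu_i}\|_1$, which is nonnegative and equals $0$ exactly when $\sum_i\alpha_i\overline{\mu_i}=\zero$ for some feasible (hence strictly positive) $\alpha$.

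Next I would verify that Algorithm~\ref{algo:AlmostSure} computes this correctly and efficiently. Computing $\overline{\M}$ is polynomial by the standing tractability assumption on the distributions (rational expected vectors computable in polynomial time). The rank of the matrix $[\overline{\mu_1}\ \cdots\ \overline{\mu_\gamma}]$ is computable in polynomial time by Gaussian elimination. The quantity $P$ is the optimum of a minimization with a piecewise-linear $\ell_1$ objective subject to the linear constraints $\alpha_i\ge 1$; this is expressible as a linear program of size polynomial in $n$ and $\gamma$, and LP is solvable in polynomial time (e.g.\ via interior-point or ellipsoid methods) over the rationals. Thus deciding whether both $\mathrm{rank}=n$ and $P=0$ hold is in PTIME, and by Theorem~\ref{thrm:mdcase} this decides $\varepsilon$-reachability.

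The main obstacle I anticipate is proving the equivalence between condition \ref{item:eav} and the pair (rank $=n$, $P=0$) cleanly, particularly the open/closed and strict/nonstrict distinctions. One must be careful that the inner-product condition in \ref{item:eav} is a \emph{strict} inequality for \emph{every} direction $v$, which corresponds to $\zero$ being in the topological interior of the cone rather than its closure; getting this to match the algorithm's $\alpha_i\ge1$ feasibility (which forces a strictly positive combination) requires verifying that the scaling by positive $\alpha_i$ does not change cone membership and that full-dimensionality of the cone is both necessary and correctly detected by the rank test. I would discharge this by invoking the compactness argument already used in Lemma~\ref{lemma:centerball} (the infimum $\lambda$ over the unit sphere is attained and positive precisely under \ref{item:eav}) to bridge the strict-inequality formulation and the conical formulation.
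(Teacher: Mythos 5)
Your proposal is correct and follows essentially the same route as the paper: invoke Theorem~\ref{thrm:mdcase} to reduce $\varepsilon$-reachability to the condition that every nonzero direction has a mode with positive expected projection, recast this as a conical-hull condition on the $\overline{\mu_i}$ equivalent to the pair (rank $=n$, $P=0$), and observe that the rank test and the linear program are both polynomial-time. The only difference is cosmetic: the paper cites a known characterization of when the nonnegative combinations of a finite vector set exhaust $\R^n$, whereas you sketch the separating-hyperplane/Farkas argument directly, which is a valid way to discharge the same step.
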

\begin{proof}
  Let $\M = \set{\mu_1, \mu_2, \ldots, \mu_\gamma}$ be an \smms{}
  with a finite set of compactly-supported modes.
  According to Theorem~\ref{thrm:mdcase}, deciding
  $\varepsilon$-reachability is equivalent to deciding whether for all
  $x \in \Rn$ we have that $x\cdot \overline{\mu}_i>0$ for some $i$.

  This later fact is equivalent to showing that any vector
  $x\in\mathbb{R}^n$ can be written as a non-negative linear
  combination of vectors in
  $\{\overline{\mu_1}, \overline{\mu_2}, \ldots,
  \overline{\mu_\gamma}\}$.
  It is well known~\cite{positivecomb} that this property holds if and
  only if the following conditions hold:
  \begin{enumerate}[a.]
  \item The set $\{\overline{\mu_1},
    \overline{\mu_2}, \ldots, \overline{\mu_\gamma}\}$ spans $\mathbb{R}^n$, and 
  \item $\sum_{i=1}^\gamma\alpha_i\overline{\mu_i}=0$ for some real numbers $\alpha_i\geq 1$.
  \end{enumerate} 
  This observation implies the correctness of
  Algorithm~\ref{algo:AlmostSure} in deciding the almost-sure
  $\varepsilon$-reachability problem for stochastic multi-mode
  systems.
 
  Note that the key computation effort in the algorithm is to check
  the solution of a linear program and to verify the full-rank
  condition of a matrix.  Since both linear
  programming~\cite{megiddo1986complexity} and matrix rank
  computation~\cite{ibarra1982generalization} can be solved in
  polynomial time, it follows that almost-sure
  $\varepsilon$-reachability problem can be decided in polynomial
  time.
\end{proof}

\begin{figure}[t]
  \centering
  	\subfloat[]{\includegraphics[width=0.49\textwidth]{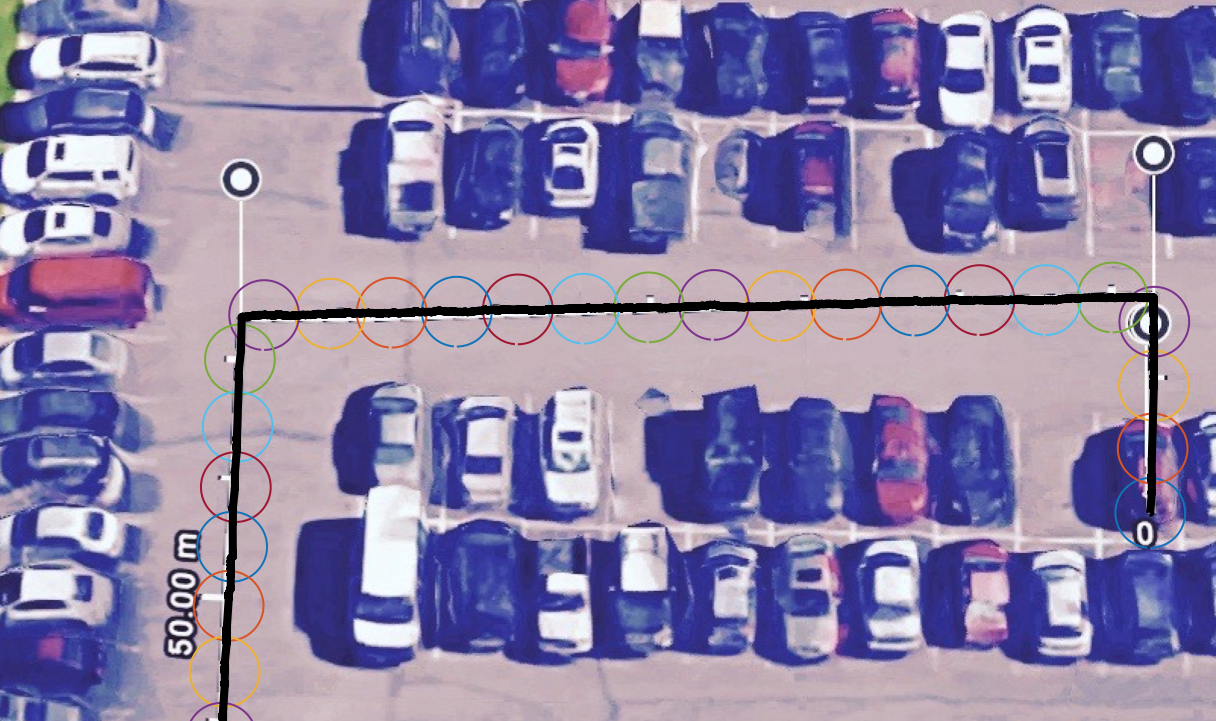}\label{fig:sim-resultsa}}\\
    \subfloat[]{\includegraphics[width=0.49\textwidth,clip,trim=1cm 0 0 0.1cm]{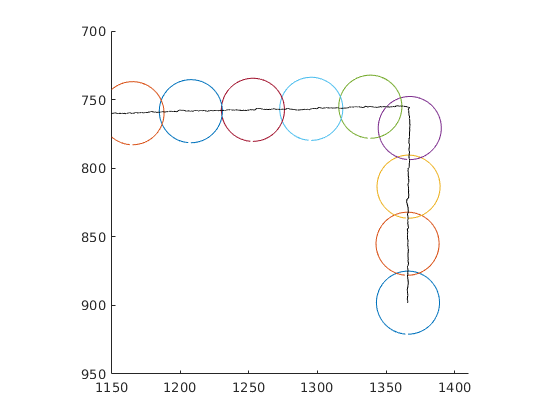}\label{fig:sim-resultsb}}
  \caption{Simulation result of a self-driving car with stochastic modes.}
  \label{fig:sim-results}
\end{figure}

Figure~\ref{fig:sim-results} shows a simulation of a two-dimensional
\smms{} that models a self driving car with stochastic control
directions. This model consists of modes
$\M=\{(0,1)+U,(1,0)+U,(0,-1)+U,(-1,0)+U\}$, where $U$ is a uniform
distribution over the $[-7.5,7.5]\times[-7.5,7.5]$ box in $\R^2$. Note that the
variance of the noise is much higher than the length of the expected
control directions, i.e., the plant has a very low signal to noise
ratio. The control objective is to take the (red) car out of the
parking lot while maintaining safety. Figure~\ref{fig:sim-resultsa}
shows a sample path generated by the proposed algorithm. The circles
mark the width of the safe region. As it can be seen in that picture,
safety is maintained while taking the car out of the parking
lot. Figure~\ref{fig:sim-resultsb} shows a zoomed-in view of the
same sample path.



\section{Conclusion}
\label{sec:conclusion}
We introduced and studied stochastic multi-mode systems, which are 
a natural formalism in motion planning when hierarchical control is
combined with noisy sensors or actuators.
A key result of this paper is that it is possible to efficiently decide the
$\varepsilon$-reachability problem, that is, whether a control strategy exists 
that steers a stochastic multi-mode system from any point in an open,
path-connected set to an arbitrary neighborhood of any other point in that set with probability one.
We have shown in particular that a stochastic system enjoys the
$\varepsilon$-reachability property if and only if the associated deterministic
system does.
This condition implies that for almost-sure $\varepsilon$-reachability of
stochastic multi-mode systems can be checked efficiently. 

As a natural next step we are investigating the possibility of employing
decision procedures for stochastic multi-mode systems in a counterexample-guided
abstraction-refinement framework in order to develop motion planning algorithms
for systems with richer dynamics (linear-hybrid systems) in the presence of
stochastic uncertainties.


\bibliographystyle{abbrv}
\bibliography{bib}

\end{document}
